\documentclass[12pt]{article}
\usepackage[T1]{fontenc}
\usepackage[utf8]{inputenc}
\usepackage{lmodern}
\usepackage{fullpage}
\usepackage{amsmath,amssymb,amsthm}
\usepackage{enumitem,booktabs,microtype}
\usepackage{tikz,tikz-cd}
\usetikzlibrary{arrows.meta,positioning,fit,backgrounds,calc}
\usepackage[colorlinks=true,linkcolor=blue!45!black,citecolor=blue!45!black,
  urlcolor=blue!45!black]{hyperref}
\hypersetup{pdftitle={Linear Laurent phenomenon algebras are of finite type},
  pdfauthor={Oliver Daisey}}

\newtheorem{thm}{Theorem}[section]
\newtheorem{lem}[thm]{Lemma}
\newtheorem{prop}[thm]{Proposition}
\newtheorem{cor}[thm]{Corollary}
\theoremstyle{definition}
\newtheorem{defn}[thm]{Definition}
\newtheorem{example}[thm]{Example}
\newtheorem{remark}[thm]{Remark}
\numberwithin{equation}{section}
\DeclareMathOperator{\Frac}{Frac}
\DeclareMathOperator{\diag}{diag}
\DeclareMathOperator{\UB}{UB}
\DeclareMathOperator{\pol}{pol}
\newcommand{\A}{\mathcal A}
\newcommand{\F}{\mathcal F}
\newcommand{\Lr}{\mathcal L}
\newcommand{\U}{\mathcal U}
\newcommand{\Z}{\mathbb Z}

\newcommand{\hf}[1]{\widehat{f^{#1}}}
\newcommand{\doi}[1]{\href{https://doi.org/#1}{doi:\nolinkurl{#1}}}
\setlist[enumerate]{itemsep=0.2em,topsep=0.3em}
\setlist[itemize]{itemsep=0.2em,topsep=0.3em}
\tikzset{vertex/.style={circle,draw,fill=white,minimum size=7mm,inner sep=1pt},
  edge/.style={-{Stealth[length=2mm]},semithick},
  seed/.style={draw,rounded corners=2pt,fill=blue!3,inner sep=6pt}}

\begin{document}
\begin{center}
  {\Large Linear Laurent phenomenon algebras\\[1mm]
  are of finite type}\\[5mm]
  {\small Oliver Daisey}\\[1mm]
  {\small\href{mailto:oliverjdaisey@gmail.com}{\texttt{oliverjdaisey@gmail.com}}}
\end{center}

\begin{abstract}
Laurent phenomenon algebras generalise cluster algebras by allowing
irreducible exchange polynomials in place of binomials. Lam and Pylyavskyy
proved finite type for those arising from graphs and conjectured that the
same holds for every Laurent phenomenon algebra admitting a linear seed.
We prove this conjecture over an arbitrary unique factorisation domain.
Our proof constructs a nonzero element of the upper algebra divisible by every
cluster variable. Comparing noninitial cluster variables with the prime
factors of this element in the initial Laurent ring bounds the number of
cluster variable classes and yields a finite normalised mutation pattern.
\end{abstract}

\tableofcontents
\section{Introduction}
Laurent phenomenon algebras (LP algebras) were introduced by Lam and
Pylyavskyy in 2012 \cite{lam2012laurent} to generalise Fomin and Zelevinsky's
cluster algebras. Introduced in \cite{fomin2002cluster}, cluster algebras
provide a combinatorial framework for studying total positivity and canonical
bases. Their generators are grouped into clusters, and mutation exchanges one
variable at a time using an exchange binomial. Among the central results of
the theory are the Laurent phenomenon and the finite-type classification by
Dynkin diagrams \cite{fomin2002cluster2}.

LP algebras allow arbitrary irreducible exchange polynomials in place of
binomials. This greater freedom retains the Laurent phenomenon, but makes the
mutation rule and the question of finite type more delicate. In rank two,
Lam and Pylyavskyy give a complete finite-type classification
\cite[Theorem~6.4]{lam2012laurent}. In higher rank, they prove finite type for
the graph LP algebras, whose initial exchange polynomials have independent constant
terms and variable coefficients in $\{0,1\}$ \cite{lam2016linear}. They
conjecture more generally that every LP algebra admitting a linear seed is of
finite type \cite[Section~1.7 and Conjecture~7.5]{lam2016linear}.

In this paper, we prove this conjecture. We set the notation
$S=(\mathbf x,\mathbf f)$ for a seed and write a linear initial exchange polynomial as
\begin{equation}\label{eq:linear}
 f^\ell=a_0^\ell+\sum_{t\ne\ell}a_t^\ell x_t,
 \qquad a_t^\ell\in A,
\end{equation}
where $A$ is a unique factorisation domain. Here, and throughout, ``linear''
means total degree at most one in the cluster variables. This includes
irreducible constant exchange polynomials. The coefficients may vanish or satisfy
relations, provided that the resulting polynomials satisfy the seed axioms.

\begin{remark}
One might try to prove the result first for generic coefficients and
then introduce relations. However, specialisation can introduce common factors
in the cancellation step of mutation and change the denominators of the
exchange Laurent polynomials. A proof for the generic case therefore does
not automatically give a proof for all linear seeds. 
\end{remark}

To state our result more precisely, put
\begin{equation}\label{eq:matrix-intro}
 z_\ell=\frac{f^\ell}{x_\ell},\qquad
 C_{\ell t}=\begin{cases}a_t^\ell&\ell\ne t,\\0&\ell=t,\end{cases}
 \qquad M=\diag(z_1,\ldots,z_n)-C,
 \qquad Y_I=\det M_{I,I}.
\end{equation}
We set $Y_\varnothing=1$. The symbols $z_\ell$ denote ordinary ratios of
exchange polynomials to their corresponding cluster variables; they need not be the variables obtained by mutating $x_\ell$, which use the exchange Laurent polynomials.
The minors $Y_I$ are weighted generalisations of Lam and Pylyavskyy's
graph LP polynomials \cite[Section~4.2, Proposition~4.5]{lam2016linear}.
Write $\U=\bigcap_{T}\Lr(T)$ for the upper LP algebra, where the intersection
is over the seeds in the mutation pattern of $S$.

\begin{thm}\label{thm:main}
Every linear LP algebra over a unique factorisation domain is of finite type.
More precisely, for a linear seed \eqref{eq:linear}, the following hold.
\begin{enumerate}
\item Every cluster variable not associated over $A$ to an initial variable
divides a nonzero $Y_I$ in $\U$.
\item Let $Q$ denote the product of all nonzero principal minors of $M$, that is,
\begin{equation}\label{eq:QH}
 Q=\prod_{\substack{\varnothing\ne I\subseteq[n]\\Y_I\ne0}}Y_I.
\end{equation}
Set $H = (x_1\cdots x_n)Q$. Then $H/v\in\U$ for every cluster variable $v$.
\item If $s$ is the number of distinct irreducible factors of $Q$ in
$A[x_1^{\pm1},\ldots,x_n^{\pm1}]$, up to units of this Laurent ring, then
there are at most $n+s$ cluster-variable classes up to $A^\times$ and at most
$\binom{n+s}{n}$ seed-equivalence classes. These admit a finite normalisation.
\end{enumerate}
\end{thm}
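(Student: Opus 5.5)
The plan is to prove the three parts in order, with part~1 carrying the real content and parts~2 and~3 following by divisibility and counting.

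\textbf{Key identity and part~1.} Mutating the linear seed at a sequence of distinct indices $I=\{i_1,\ldots,i_k\}$ should produce new cluster variables that are, up to units of $A$ and monomials in the remaining initial variables, ratios of the form $Y_I/(x_{I\setminus\{i\}}\text{-monomial}\cdot Y_{I\setminus\{i\}})$. Expanding $\det M_{I,I}$ along the row and column of $i$ gives
\[
 Y_I = z_i\,Y_{I\setminus i} - \sum_{t,u\in I\setminus i} a^i_t a^u_i\,(\text{cofactor}),
\]
and this is the Laurent analogue of the exchange relation $x_i x_i' = \hat f^i$. So I would first prove by induction on $|I|$ a lemma describing seeds reachable by such mutations. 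In these seeds the cluster consists of $x_t$ for $t\notin I$ together with variables $x'_{I,i}$ expressed via the ratios $Y_I/Y_{I\setminus i}$ times initial variables. The exchange polynomials remain linear in suitable normalised variables; this is where the graph-LP computations of Lam--Pylyavskyy are generalised. I would also need a closure lemma: mutating such a seed at any index yields another seed of the same shape, indexed by $I\cup\{j\}$ or $I\setminus\{j\}$. Zero minors and specialised coefficients are handled by working directly with the cancellation step, dividing by the gcd rather than assuming genericity, as the remark warns. Since the whole mutation pattern stays inside this family, every noninitial cluster variable $v$ has the form (unit)$\cdot Y_I/(\text{monomial}\cdot Y_J)$ with $Y_I\ne0$. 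Moreover $Y_I = v\cdot w$, where $w$ is a product of other cluster variables of the same seed together with $Y_J$ and monomials, and all of these lie in $\U$. This gives $v\mid Y_I$ in $\U$.

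\textbf{Part~2.} If $v$ is associate to an initial $x_i$, then $H/v$ is a unit multiple of $Q\prod_{t\neq i}x_t\in\U$. Here I use $Y_I\in\U$: it is a polynomial in the $z_\ell$, and each $z_\ell = x'_\ell\cdot(\text{Laurent factor})$ lies in $\U$ after checking that the denominators cancel. Otherwise, part~1 gives $Y_I=v\,w$ with $w\in\U$. Then
\[
 H/v = w\cdot x_1\cdots x_n\prod_{J\ne I}Y_J \in \U.
\]

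\textbf{Part~3.} This is a counting argument in the Laurent ring $R=A[x^{\pm1}]$, which is a UFD. By part~2, every cluster variable $v$ divides $H$ in $\U\subseteq R$, so $v$ divides $H$ in $R$. A noninitial $v$ is irreducible in $R$ up to monomial units, using the LP axiom that cluster variables are distinct primes in $\U$ together with the Laurent phenomenon. Each such $v$ is therefore associate in $R$ to one of the $s$ prime factors of $Q$; the monomial factors are units in $R$. Two cluster variables associated in $R$ differ by a Laurent monomial times a unit of $A$. Because both lie in the Laurent rings of all seeds and are coprime to the initial variables, they are associate over $A$. This bounds the classes by $n+s$. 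A cluster is an $n$-subset of these classes, which gives $\binom{n+s}{n}$ seed classes. Finiteness of the normalised pattern then follows because seeds are determined up to normalisation by their clusters.

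\textbf{Main obstacle.} The hard step is the closure lemma in part~1. Degenerate coefficients can cause extra cancellation in $\hat f$, and then the mutated exchange polynomials must be shown to stay in the $Y$-minor family. I would first attack this by induction using the minor expansion above together with Sylvester's/Desnanot--Jacobi identity
\[
 Y_I\,Y_{I\setminus\{i,j\}} = Y_{I\setminus i}\,Y_{I\setminus j} - (\text{off-diagonal minors}).
\]
This identity is what makes the exchange relation between $x'_{I,i}$ and $x'_{I\setminus j,i}$ close up.
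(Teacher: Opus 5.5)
\textbf{There is a genuine gap in part~1.} Part~1 carries all of the content of the theorem. Your argument for it rests on a closure lemma that you do not prove, and the explicit shape you propose for that lemma is wrong.

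\begin{itemize}
\item \emph{The ratio ansatz fails in rank two.} Take $f^1=1+x_2$, $f^2=1+x_1$. The sequence $\mu_2\mu_1$ inserts $Y_{\{1,2\}}=(1+x_1+x_2)/(x_1x_2)$ itself. Your ratio $Y_{\{1,2\}}/Y_{\{1\}}=(1+x_1+x_2)/\bigl(x_2(1+x_2)\bigr)$ has the non-monomial denominator $1+x_2$, so no monomial rescaling of it is a cluster variable.
\item \emph{Exchange polynomials do not stay linear.} In rank three take $f^\ell=A_\ell+\sum_{t\ne\ell}x_t$ with independent indeterminates $A_\ell$. Mutation at $1$ gives $f_1^2=A_1+x_3+A_2y_1+x_3y_1$, which is quadratic in the new cluster.
\item \emph{Specialised coefficients make things worse.} Minors can split or vanish identically, so one can only say that noninitial cluster variables divide minors. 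The cancellation step can also remove extra factors and change the denominators of exchange Laurent polynomials. This is exactly the failure of ``generic first, specialise later'' that the paper's opening remark describes.
\end{itemize}
Calling this the ``main obstacle'' and invoking Desnanot--Jacobi does not close it. No explicit mutation-closed family of seeds is available at this generality. That is why the paper asserts only divisibility, and explicitly does not claim the $Y_I$ are cluster variables.

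\textbf{The missing idea} is that you never need the mutation pattern beyond first neighbours.
\begin{itemize}
\item By the starfish identity $\UB(T)=\U$, every cluster variable $v$ is prime in $\U$. Reduction modulo its exchange polynomial in the neighbouring chart then gives a homomorphism $\rho\colon\U\to k[T]$ with kernel $v\U$ and $\rho(v')=T$.
\item Suppose $v$ divided no nonzero minor. Then all $X_\ell=\rho(x_\ell)$ and $W_\ell=\rho(z_\ell)$ are nonzero. Some $X_\ell$ has positive degree, since otherwise $\rho$, extended to $\Lr(S)=\U[x_1^{-1},\ldots,x_n^{-1}]$, would miss $\rho(v')=T$.
\item Let $I$ be the set of indices of maximal degree. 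The reduced relations $X_\ell W_\ell=\overline{a_0^\ell}+\sum_{t\ne\ell}\overline{a_t^\ell}X_t$ force $W_\ell\in k^\times$ for $\ell\in I$. They also make the vector of leading coefficients a kernel vector of $\rho(M_{I,I})$. Hence $\rho(Y_I)=0$, that is, $v$ divides $Y_I$ in $\U$.
\item If $Y_I$ vanishes identically, you additionally need two facts: $I$ then contains a closed homogeneous component $J$, and $Y_{J\setminus\{i\}}/x_i\in\U$ for $i\in J$. Reducing $x_i\cdot\bigl(Y_{J\setminus\{i\}}/x_i\bigr)=Y_{J\setminus\{i\}}$ gives a positive-degree polynomial times a polynomial equal to a nonzero constant, a contradiction.
\end{itemize}

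\textbf{Parts~2 and~3} follow the paper's outline once part~1 is available, with two corrections. First, primality of cluster variables in $\U$ is not an LP axiom. It is the starfish-based primality lemma, and passing it to the Laurent ring needs the localisation $\U[x_1^{-1},\ldots,x_n^{-1}]=\Lr(S)$. Second, a finite normalisation needs seed rigidity together with a mutation-compatible choice of representatives, not just a count of clusters.
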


We first recall the basic definitions given in \cite{lam2012laurent}, taking care to distinguish ordinary
exchange polynomials, exchange Laurent polynomials, and choices of coefficient units.
Section~\ref{sec:upper} shows that cluster variables are prime as elements of the upper
algebra, and constructs polynomial residues along them.
Section~\ref{sec:determinants} uses the directed coefficient graph to detect
vanishing minors of the matrix $M$. In Section~\ref{sec:finite}, an argument based on the maximum degree of the polynomial residues
is used to prove Theorem~\ref{thm:main}.

\section{Preliminaries}\label{sec:preliminaries}
We first recall the basic definitions. A more detailed treatment is available
in \cite{lam2012laurent}. Let $A$ be a commutative unique factorisation domain,
let $K=\Frac(A)$, and let $\F=K(x_1,\ldots,x_n)$, where $n\geq1$.
Examples of coefficient rings include $\Z$, polynomial rings over $\Z$ or a
field, and their localisations. We impose no restriction on the
characteristic. We write $g\sim_A h$ and say that $g$ and $h$ are \emph{associated} if $g=uh$ for some $u\in A^\times$.

\subsection{Seeds and exchange Laurent polynomials}
\begin{defn}[Seed]
A \emph{seed} is a tuple $S=(\mathbf x,\mathbf f)$, where
$\mathbf x=\{x_1,\ldots,x_n\}$ is a set of algebraically independent
generators of $\F$ over $K$, and
$\mathbf f=\{f^1,\ldots,f^n\}\subset A[x_1,\ldots,x_n]$ satisfies:
\begin{enumerate}[label=(LP\arabic*)]
\item Each $f^\ell$ is irreducible and is not divisible by a cluster variable.
\item The polynomial $f^\ell$ does not involve $x_\ell$.
\end{enumerate}
Here irreducible means nonzero, a nonunit, and not a product of two nonunits.
We call $\mathbf x$ the \emph{cluster}, its elements \emph{cluster variables},
and the $f^\ell$ \emph{exchange polynomials}. The rank of the seed is $n$.
\end{defn}

The pairs $(x_\ell,f^\ell)$ are considered unordered, but their correspondence is part
of the data. We use labels when recording mutation sequences. 

\begin{defn}[Exchange Laurent polynomials]\label{def:hat}
The exchange Laurent polynomial associated to $f^\ell$ has the form
\begin{equation}\label{eq:hat}
 \hf\ell=f^\ell\prod_{t\ne\ell}x_t^{-e_{\ell t}},
 \qquad e_{\ell t}\in\Z_{\geq0}.
\end{equation}
The exponents are uniquely determined by requiring, for every $t\ne\ell$,
that
\[
 \hf\ell\big|_{x_t\leftarrow f^t/\xi}
 \in A[x_1^{\pm1},\ldots,x_{t-1}^{\pm1},\xi^{\pm1},
             x_{t+1}^{\pm1},\ldots,x_n^{\pm1}]
\]
and that this element is not divisible by $f^t$ in that ring.
\end{defn}

Equivalently, $e_{\ell t}$ is the largest power of $f^t$ dividing
$f^\ell|_{x_t\leftarrow f^t/\xi}$ in that Laurent ring. The existence and uniqueness of these Laurent polynomials are established in
\cite[Lemmas~2.3--2.4]{lam2012laurent}.

\begin{lem}\label{lem:denominator}
If $e_{\ell t}>0$, then $f^t$ does not involve $x_\ell$.
\end{lem}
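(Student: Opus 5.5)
We argue by contraposition: assume $f^t$ involves $x_\ell$ and show $e_{\ell t}=0$. By the equivalent description after Definition~\ref{def:hat}, $e_{\ell t}$ is the largest power of $f^t$ dividing $g:=f^\ell|_{x_t\leftarrow f^t/\xi}$ in the Laurent ring
\[
R=A[x_1^{\pm1},\ldots,x_{t-1}^{\pm1},\xi^{\pm1},x_{t+1}^{\pm1},\ldots,x_n^{\pm1}].
\]
So it suffices to show that $f^t\nmid g$ in $R$.

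The first step reduces divisibility in $R$ to the ring $P$ obtained from $R$ by not inverting $x_\ell$. By (LP2), $f^t$ does not involve $x_t$, so substituting for $x_t$ leaves $f^t$ unchanged and it is a genuine element of $R$. Since $f^t$ is irreducible in $A[\mathbf x]$, is not divisible by any cluster variable (LP1), and does not involve $x_t$, it remains irreducible, hence prime, in the Laurent ring $R$ (localising a UFD at monomials preserves primes not dividing them). The variable $\xi$ is a new independent variable. Now $g$ has the form $\xi^{-m}h$, where $m$ is the $x_t$-degree of $f^\ell$ and $h=\sum_k c_k (f^t)^k \xi^{m-k}$. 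Here the $c_k$ are the coefficients of $x_t^k$ in $f^\ell$; they lie in $A[x_s : s\ne t]$ and, by (LP2), do not involve $x_\ell$.

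The second step treats the main case $m\ge1$. Reducing modulo the prime $f^t$ kills every term except $c_0\xi^m$. Hence $f^t\mid g$ would force $f^t\mid c_0$ in $R$. If $c_0=0$ then $x_t\mid f^\ell$, which contradicts (LP1). Otherwise $c_0$ is a nonzero Laurent polynomial not involving $x_\ell$, whereas $f^t$ does involve $x_\ell$. Since no monomial factor can cancel the $x_\ell$-dependence, a multiple of $f^t$ by a Laurent polynomial $q$ cannot be free of $x_\ell$. To see this, compare the highest and lowest $x_\ell$-degrees in the product $q f^t$: these are additive because $R$ is a domain and the leading and trailing coefficients multiply to nonzero elements. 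So the $x_\ell$-degree span of $q f^t$ is at least that of $f^t$, which is positive. This contradiction gives $e_{\ell t}=0$.

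The remaining case is $m=0$, where $f^\ell$ does not involve $x_t$ and $g=f^\ell$. If $f^t\mid f^\ell$ in $R$, then both are irreducible and not divisible by monomials, so they are associate up to a unit of $A$. But $f^t$ involves $x_\ell$ while $f^\ell$ does not, by (LP2), which is impossible. Again $e_{\ell t}=0$.

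The step I expect to require the most care is the first one: verifying that $f^t$ stays prime in $R$, and that divisibility by $f^t$ in the Laurent ring behaves as claimed. The rest is bookkeeping with the $x_\ell$-degree span.
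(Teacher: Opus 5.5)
Your proof is correct and takes essentially the paper's route: you reduce $f^t\mid f^\ell|_{x_t\leftarrow f^t/\xi}$ to $f^t\mid c_0$ (the paper's $b_0$), note $c_0\ne0$ by (LP1) and that $c_0$ is free of $x_\ell$ by (LP2), and conclude by additivity of degree. The only differences are that you argue by contraposition and use an $x_\ell$-degree span in the Laurent ring instead of passing to the polynomial ring; your separate case $m=0$ is harmless but redundant, since there $g=c_0$ and the main argument already applies.
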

\begin{proof}
Write $f^\ell=\sum_{r\geq0}b_rx_t^r$, with $b_r$ independent of $x_t$.
By (LP1), $b_0\ne0$. Divisibility of
$\sum_rb_r(f^t)^r\xi^{-r}$ by $f^t$ implies divisibility of its constant
coefficient $b_0$. This divisibility can be checked in the polynomial ring, since
localising at coordinate variables does not invert any factor of $f^t$.
But $b_0$ is independent of $x_\ell$ by (LP2). Additivity of polynomial
degree over a domain shows that its factor $f^t$ is also independent of
$x_\ell$.
\end{proof}

For a nonzero Laurent polynomial $G$ in a cluster $\mathbf q$, we shall use
the notation
\begin{equation}\label{eq:pol}
 \pol_{\mathbf q}(G)=G\prod_jq_j^{-m_j},\qquad
 m_j=\min\{\text{exponent of }q_j\text{ in a monomial of }G\}.
\end{equation}
This is the unique coordinate-monomial multiple of $G$ which is polynomial
and has no coordinate factor; in other words, it is the device that normalises exchange Laurent polynomials to exchange polynomials. In particular, it does not rescale coefficients.

\subsection{Mutation}
The important difference from the cluster setting is that an LP exchange
relation uses $\hf i$, and the subsequent exchange polynomials are defined
only up to coefficient units.

\begin{defn}[Mutation]\label{def:mutation}
	Mutation at index $i$ replaces $S$ by $S_i=(\mathbf y_i,\mathbf f_i)$. The procedure operates as follows. The cluster is
\[
 \mathbf y_i=\{x_1,\ldots,x_{i-1},y_i,x_{i+1},\ldots,x_n\},
 \qquad x_iy_i=\hf i.
\]
We refer to the variables $x_i$, $y_i$ as \emph{mutation partners}. We set $f_i^i=f^i$. For $\ell\ne i$, if $f^\ell$ is independent of $x_i$,
we take $f_i^\ell\sim_A f^\ell$. Otherwise, we perform the following three steps:
\begin{enumerate}
\item \emph{Substitution.} Put
\[
 Z=\hf i\big|_{x_\ell\leftarrow0},\qquad
 G=f^\ell\big|_{x_i\leftarrow Z/y_i}.
\]
\item \emph{Cancellation.} In the Laurent ring in the new coordinates,
take $G$ and remove all occurrences of each prime factor of $Z$.
\item \emph{Normalisation.} Take the polynomial representative
\eqref{eq:pol} of the result. This gives $f_i^\ell$ up to a unit of $A$.
\end{enumerate}
\end{defn}

Lemma~\ref{lem:denominator} ensures that $\hf i$ has no negative power of
$x_\ell$ in the substitution step. Moreover, $Z\ne0$ because $f^i$ has no
factor $x_\ell$. The cancellation removes entire prime powers occurring in
$G$, not merely the powers occurring in a single greatest common divisor
with $Z$.

We write
\[
 S_{s_1\cdots s_k}=\mu_{s_k}\circ\cdots\circ\mu_{s_1}(S)
\]
and denote the exchange polynomial at position $\ell$ by $f^\ell_{s_1\cdots s_k}$.
When needed, $y_{s_1\cdots s_k}$ denotes the variable inserted by the final
mutation. Equality after a mutation sequence
will be stated either for specified representatives or up to seed
equivalence. 

\begin{prop}[Basic mutation properties]\label{prop:mutation}
Mutation produces a valid seed and is reversible with suitable choices of
units. Moreover, $f^\ell$ depends on $x_i$ if and only if $f_i^\ell$ depends
on $y_i$, and $\widehat{f_i^i}=\hf i$.
\end{prop}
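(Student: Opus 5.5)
The proposition collects standard facts from \cite{lam2012laurent}, and I would prove it by checking each claim directly from Definitions~\ref{def:hat} and~\ref{def:mutation}, using Lemma~\ref{lem:denominator} where needed.

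\textbf{Dependence on $y_i$.} First I would show that $f^\ell$ depends on $x_i$ if and only if $f_i^\ell$ depends on $y_i$. If $f^\ell$ is independent of $x_i$, then $f_i^\ell\sim_A f^\ell$, which does not involve $y_i$. Otherwise $G=f^\ell|_{x_i\leftarrow Z/y_i}$ has positive degree in $y_i^{-1}$. Here $Z$ is independent of $x_\ell$ and of $x_i$, and so involves neither $y_i$ nor $x_\ell$. Hence the cancellation step removes only factors free of $y_i$. The normalisation step multiplies by a monomial and so cannot remove the $y_i$-dependence.

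\textbf{Validity of the new seed.} Next I would check that $\mathbf y_i$ is algebraically independent. This holds because $y_i=\hf i/x_i$ and $\hf i$ does not involve $x_i$, so $K(\mathbf y_i)=\F$. For (LP2), $f_i^\ell$ should not involve $x_\ell$. Since $Z=\hf i|_{x_\ell\leftarrow 0}$ is free of $x_\ell$, the only possible source of $x_\ell$ is $f^\ell$, which is itself free of $x_\ell$. For (LP1), I would use unique factorisation in the Laurent ring. Write $f^\ell=P(x_i)$, irreducible. Then $G$ is a rational transform of $P$ in the new coordinates, via the birational map $x_i\mapsto Z/y_i$. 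Any factorisation of $\pol(G)$, after cancelling the prime factors of $Z$, would pull back to a factorisation of $f^\ell$ up to factors dividing powers of $Z$ and monomials. This is the argument in \cite[Lemma~2.7 / Prop.~2.8]{lam2012laurent}, and I would follow it. Non-divisibility of $f_i^\ell$ by cluster variables is built into the $\pol$ normalisation.

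\textbf{Equality $\widehat{f_i^i}=\hf i$.} For this I would compare exponents. By definition $f_i^i=f^i$. For $t\ne i$, the exponent $e_{it}$ computed in $S_i$ involves substituting $x_t\leftarrow f_i^t/\xi$. When $e_{it}>0$, Lemma~\ref{lem:denominator} gives that $f^t$ is free of $x_i$, so $f_i^t\sim_A f^t$ and the computation is unchanged. When $f^t$ involves $x_i$, we need to see that $f_i^t$ cannot divide the substituted $f^i$. I would show this by relating $f_i^t$ back to $f^t$ via the first claim.

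\textbf{Reversibility.} Finally, I would mutate $S_i$ at $i$. We have $y_i\cdot x_i=\widehat{f_i^i}$, so we recover $x_i$. For $\ell\ne i$, I would show that applying the three-step procedure twice returns $f^\ell$ up to $A^\times$. Since $Z$ computed in $S_i$ agrees with $Z$ computed in $S$ (by the equality $\widehat{f_i^i}=\hf i$ and $x_\ell\leftarrow0$), the double substitution $x_i\mapsto Z/y_i\mapsto Z/(Z/x_i)$ is the identity. The only question is whether the cancellations lose or add factors of $Z$. This is the main obstacle: one must verify that the factors removed the first time are exactly restored by $\pol$ and the second cancellation, using that $f^\ell$ is irreducible and not divisible by any prime factor of $Z$ (if it were, it would be associate to it; I would rule this out by degree in $x_i$). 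I would handle this by following \cite[Prop.~2.10]{lam2012laurent}, tracking the units of $A$ explicitly.
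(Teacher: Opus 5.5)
Your outline is correct, and it differs from the paper mainly in how much is verified on the spot. The paper's proof is only a citation of \cite[Section~2.2]{lam2012laurent}, naming Lemma~2.11 and Propositions~2.15--2.16 there; your Lemma~2.7/Prop.~2.8/Prop.~2.10 references should be checked against these. You instead check several claims directly: the dependence claim, algebraic independence, (LP2), the equality $\widehat{f_i^i}=\hf i$, and the coincidence of the two $Z$'s. You defer to Lam--Pylyavskyy only for irreducibility and the unit bookkeeping in reversibility. This shows what each claim actually uses: (LP1), Lemma~\ref{lem:denominator} in both seeds, and primality of $f^\ell$ in the Laurent ring. But the two hardest points still rest on the cited source, so the argument is not independent of it. The paper's citation is the economical version of the same thing.

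Three steps should be tightened.

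\emph{Dependence on $y_i$.} Multiplying by a monomial does remove $y_i$-dependence when that dependence is monomial. So you need $G=\sum_r b_rZ^ry_i^{-r}$ to have at least two $y_i$-exponents. This holds because $b_0=f^\ell|_{x_i\leftarrow0}\ne0$ by (LP1). Dividing by primes of $Z$, which can be chosen free of $y_i$, does not change the $y_i$-support.

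\emph{The equality $\widehat{f_i^i}=\hf i$.} Split according to whether $f^t$ involves $x_i$, not according to $e_{it}$. If it does not, then $f_i^t\sim_A f^t$ and the computation is literally the same. If it does, then $e_{it}=0$ by Lemma~\ref{lem:denominator} in $S$. The exponent $e'_{it}$ computed in $S_i$ is also $0$, by the same lemma applied in the valid seed $S_i$, since $f_i^t$ involves $y_i$ by the first claim. No further comparison of $f_i^t$ with $f^t$ is needed.

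\emph{Reversibility.} Nothing is ``restored''. Write $G=UR$, where $U$ is the $Z$-primary part, a product of primes of $Z$ chosen free of $y_i$. Write $f_i^\ell=\epsilon mR$ with $\epsilon\in A^\times$ and $m$ a Laurent monomial. Substituting $y_i\leftarrow Z/x_i$ turns $G$ back into $f^\ell$ and fixes $U$. So the second substituted element is $\epsilon$ times a Laurent monomial times $Z^kf^\ell/U$, where $k$ is the $y_i$-exponent of $m$. The second cancellation deletes the whole $Z$-primary factor, and $\pol$ deletes the monomial, leaving $\epsilon f^\ell$. The only input is the fact you identified: $f^\ell$ is prime in the Laurent ring and, having at least two distinct $x_i$-exponents, is not associated to any prime of $Z$. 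The same two facts also complete your irreducibility sketch: a factor of $f_i^\ell$ whose pullback involves only primes of $Z$ must have a single $y_i$-exponent, and is then excluded by the cancellation and $\pol$.
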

\begin{proof}
These are the mutation properties established in
\cite[Section~2.2, in particular Lemma~2.11 and
Propositions~2.15--2.16]{lam2012laurent}.
\end{proof}

\subsection{LP algebras, equivalence, and finite type}
\begin{defn}[Seed equivalence]
	Two seeds are \emph{equivalent} if, after appropriately matching cluster variables to exchange polynomials in each seed, corresponding variables and exchange polynomials are
associated over $A$ as elements of $\F$. We write $[S]$ for the equivalence
class of $S$.
\end{defn}
When coordinates have been rescaled, their polynomial expressions are
interpreted in the common ambient field. Mutation respects this equivalence
\cite[Lemma~3.1]{lam2012laurent}.

\begin{defn}[Laurent phenomenon algebra]
A Laurent phenomenon algebra $\A=(\A,\mathbf S)$ consists of a connected
mutation pattern of seeds $\mathbf S$ in $\F$, closed under mutation with
specified unit choices, together with the $A$-subalgebra $\A\subseteq\F$
generated by its cluster variables. The notation $\A(S)$ indicates a chosen
initial seed $S$ in this pattern.
\end{defn}
Equivalently one may first work with the mutation class up to seed
equivalence. Multiplying generators by coefficient units does not change
the generated $A$-algebra. Connectedness excludes a disjoint union of
unrelated mutation classes.

\begin{prop}[Laurent phenomenon]\label{prop:laurent}
For any seed $T$ of an LP algebra $\A$, every cluster variable is an element
of $\Lr(T)$. In particular, $\A\subseteq\Lr(T)$.
\end{prop}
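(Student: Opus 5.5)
This is the Laurent phenomenon for LP algebras, and I will derive it from the Laurent phenomenon theorem of Lam and Pylyavskyy \cite[Theorem~5.1]{lam2012laurent}. The plan is to reduce it to a local statement and then argue by induction along mutation sequences, as in the caterpillar argument of Fomin--Zelevinsky adapted in \cite{lam2012laurent}.

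First, the reduction. Since $\Lr(T)$ is a ring and $\A$ is generated over $A$ by cluster variables, the inclusion $\A\subseteq\Lr(T)$ follows once every cluster variable lies in $\Lr(T)$. By connectedness, every seed of the pattern is reached from $T$ by a finite mutation sequence, and unit choices only rescale variables by elements of $A^\times$. It therefore suffices to show the following: for every sequence $s_1,\ldots,s_k$, the variable $y_{s_1\cdots s_k}$ is a Laurent polynomial over $A$ in the cluster of $T$. I will prove this by induction on $k$, in the stronger form used in \cite{lam2012laurent}.

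The stronger statement concerns a seed $T'$ obtained from $T$, in which we mutate at $i$, then $j$, then $i$ again. The claim is that the resulting variable is Laurent in $T'$, and that it is coprime in $\Lr(T')$ to the previously produced variables. The coprimality clause is what lets the induction continue: if a new variable is Laurent in two adjacent clusters whose union of variables is coprime, it is Laurent in the common cluster.

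The key local step is the rank-two-like computation. Write $x_iy_i=\hf i$, then mutate at $j$ using $f_i^j$ from Definition~\ref{def:mutation}, and finally mutate again at $i$. The exchange Laurent polynomial is built from $f^j$ by substituting $x_i\leftarrow Z/y_i$ and cancelling the prime factors of $Z=\hf i|_{x_j\leftarrow0}$. One must check that $\hf{i}$, evaluated in the new coordinates, is divisible by the appropriate power of $f_i^j$. This check uses the defining property of the exponents $e_{\ell t}$ in Definition~\ref{def:hat}, namely maximal divisibility after the substitution $x_t\leftarrow f^t/\xi$, together with Lemma~\ref{lem:denominator}. Coprimality of $y_i$ and $y_{ij}$ comes from irreducibility (LP1) and from the fact that $f^i$ has no factor $x_j$.

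I expect the main obstacle to be this divisibility and coprimality verification in the three-step $i,j,i$ sequence. The cancellation step removes whole prime powers of $Z$, and that must be shown not to destroy divisibility. Since this is precisely the content of \cite[Section~5]{lam2012laurent}, I would cite it rather than redo it. The proposition is then stated here in the paper's notation, with the remark that equivalence of seeds (Lemma~3.1 of \cite{lam2012laurent}) makes the result independent of unit choices.
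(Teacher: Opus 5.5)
Your proposal is correct and rests on the same citation as the paper, whose entire proof is \cite[Theorem~5.1]{lam2012laurent}; your reduction from $\A$ to its generating cluster variables is just the ``in particular'' clause. One caution about your sketch of the internals: the caterpillar induction step does not pass through ``two adjacent clusters'' but through the inclusion $\Lr_1\cap\Lr_3\subseteq\Lr_0$ of Lemma~\ref{lem:intersection} for the path \eqref{eq:caterpillar}, where the seeds of $\Lr_1$ and $\Lr_3$ are two mutations apart and each is one step closer to the target seed. Since you defer these details to the cited theorem, this slip does not affect the argument.
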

\begin{proof}
This is \cite[Theorem~5.1]{lam2012laurent}.
\end{proof}

\begin{defn}[Normalisation and finite type]\label{def:finite}
A pattern is \emph{normalised} if it contains exactly one representative of
each of its seed-equivalence classes. A \emph{normalisation} of a pattern is
a mutation-compatible choice of equivalent representative seeds. An LP
algebra is \emph{of finite type} if it admits a normalisation with finitely
many seeds.
\end{defn}
Note that choosing arbitrary representatives does not by itself check closure under
the exact mutation rule $f_i^i=f^i$. We shall make a compatible choice in
Proposition~\ref{prop:normalisation}. 

In rank two, if the exchange degrees are $b,c\geq0$, finite
type is equivalent to $bc<4$ \cite[Theorem~6.4]{lam2012laurent}.

\section{The upper algebra and polynomial residues}\label{sec:upper}
In this section we establish some notions for arbitrary LP algebras. The purpose is to let us
replace questions about an entire mutation pattern by calculations in a
seed and its immediate neighbours.

\subsection{A finite intersection of Laurent rings}
\begin{defn}
The \emph{upper LP algebra} and the \emph{upper bound at a seed $T$} are
\[
 \U=\bigcap_{T'\in\mathbf S}\Lr(T'),\qquad
 \UB(T)=\Lr(T)\cap\bigcap_{i=1}^n\Lr(\mu_iT).
\]
All intersections take place in $\F$. The Laurent phenomenon gives
$\A\subseteq\U\subseteq\UB(T)$.
\end{defn}
The Laurent ring is unchanged by coefficient-unit rescalings, hence these
definitions are independent of the representatives used for neighbouring
seeds.

We recall precisely the local input from the LP caterpillar lemmas. Consider
an alternating path in two distinct directions, suppressing the unchanged
coordinates:
\begin{equation}\label{eq:caterpillar}
 (x,y)\longrightarrow(z,y)\longrightarrow(z,u)\longrightarrow(v,u).
\end{equation}
If the four Laurent rings are $\Lr_0,\Lr_1,\Lr_2,\Lr_3$, then
\begin{equation}\label{eq:caterpillar-input}
 u,v\in\Lr_0,\qquad
 \gcd_{\Lr_0}(z,u)=\gcd_{\Lr_0}(z,v)=1.
\end{equation}
These are \cite[Lemmas~5.2 and 5.4]{lam2012laurent}. These lemmas also apply when exchange polynomials coincide or differ only by a coefficient unit. 
We note that $z$ is prime in $\Lr_0$, since it differs from the exchange polynomial at $x$ by a Laurent unit.

\begin{lem}[Caterpillar intersection]\label{lem:intersection}
For the path \eqref{eq:caterpillar},
\[
 \Lr_1\cap\Lr_3\subseteq\Lr_0,
 \qquad \Lr_0\cap\Lr_2\subseteq\Lr_3.
\]
\end{lem}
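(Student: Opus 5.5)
We prove the two inclusions separately, using only the caterpillar input \eqref{eq:caterpillar-input} and elementary facts about Laurent rings over a UFD. Write $\Lr_0=A[x^{\pm1},y^{\pm1},\ldots]$, with the remaining coordinates suppressed. Then $\Lr_1=A[z^{\pm1},y^{\pm1},\ldots]$, where $xz$ is the exchange Laurent polynomial at $x$. Similarly $\Lr_2=A[z^{\pm1},u^{\pm1},\ldots]$ and $\Lr_3=A[v^{\pm1},u^{\pm1},\ldots]$. Since $z\in\Lr_0$ differs from the exchange polynomial at $x$ by a Laurent unit, $z$ is prime in $\Lr_0$. Also $x$ is a unit in $\Lr_0$, and $z\in\Lr_0$, so $\Lr_1\subseteq\Lr_0[z^{-1}]$.

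\emph{First inclusion.} Let $g\in\Lr_1\cap\Lr_3$. Then $g\in\Lr_0[z^{-1}]$, so $g=h/z^k$ for some $h\in\Lr_0$ and $k\ge0$, and we take $k$ minimal. We must show $k=0$.

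From $g\in\Lr_3$, write $g=p/(v^au^b)$ with $p$ a polynomial in $v,u$ and the other coordinates of $\Lr_3$. By \eqref{eq:caterpillar-input}, $u,v\in\Lr_0$, and the other coordinates of $\Lr_3$ are coordinates of $\Lr_0$. Hence $p\in\Lr_0$, and $v^au^bg=p\in\Lr_0$. This gives $h\,v^au^b=p\,z^k$ in $\Lr_0$.

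Now $z$ is prime in $\Lr_0$ and coprime to $u$ and to $v$. So if $k>0$, then $z\mid h$, contradicting the minimality of $k$. Thus $g=h\in\Lr_0$.

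\emph{Second inclusion.} We reduce to the first by symmetry: reversing the path gives $(v,u)\to(z,u)\to(z,y)\to(x,y)$. This is again an alternating path in two directions, because mutation is reversible (Proposition~\ref{prop:mutation}). The caterpillar lemmas therefore apply to it with the roles relabelled: $\Lr_3$ becomes the base ring, and $\Lr_2,\Lr_1,\Lr_0$ become the new $\Lr_1,\Lr_2,\Lr_3$. The first inclusion for the reversed path reads $\Lr_2\cap\Lr_0\subseteq\Lr_3$, which is exactly the claim.

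\emph{Main obstacle.} The delicate point is justifying that \eqref{eq:caterpillar-input} holds for the reversed path. Specifically, we need $y,x\in\Lr_3$, and $z$ coprime in $\Lr_3$ to $y$ and to $x$. The cited Lemmas~5.2 and 5.4 are stated for any such alternating path, including when the exchange polynomials coincide or differ by units, so reversibility should suffice.

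If the symmetry is not available, we would argue the second inclusion directly. Take $g\in\Lr_0\cap\Lr_2$. We have $\Lr_2\subseteq\Lr_3[z^{-1}]$, since $v$ is a unit in $\Lr_3$ and the exchange relation at $v$ expresses $z$ inside $\Lr_3$. We would then repeat the minimality argument, which requires $z$ to be prime in $\Lr_3$ and coprime there to $x$ and $y$. That is again the reversed-path caterpillar input.
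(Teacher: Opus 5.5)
Your proof is correct and is essentially the paper's argument. Both write the element as $a/z^r$ and as $b/(u^sv^t)$ with $a,b\in\Lr_0$, using $u,v\in\Lr_0$, and then use primality of $z$ in $\Lr_0$ and its coprimality to $u$ and $v$ to force $z^r\mid a$; your minimal-$k$ formulation is the same step. Both obtain the second inclusion by applying the first to the reversed path, which is justified exactly as you indicate, by reversibility of mutation and the fact that the cited caterpillar lemmas hold for any such alternating path.
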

\begin{proof}
If $h\in\Lr_1\cap\Lr_3$, then, since $x,y$ and the suppressed coordinates
are units in $\Lr_0$, we may write
\[
 h=\frac{a}{z^r}=\frac{b}{u^sv^t},\qquad
 a,b\in\Lr_0,\quad r,s,t\geq0.
\]
The equality $au^sv^t=bz^r$, primality of $z$, and
\eqref{eq:caterpillar-input} imply $z^r\mid a$ in $\Lr_0$. Thus
$h\in\Lr_0$. Reversing the path proves the other inclusion.
\end{proof}

\begin{prop}[Upper-bound invariance]\label{prop:upper}
For every seed $T$, one has $\UB(T)=\U$.
\end{prop}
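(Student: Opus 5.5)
We show that $\UB(T)$ does not change when $T$ is replaced by a neighbour. Since the mutation pattern is connected, this makes $\UB(T)=\UB(T')$ for all seeds $T,T'$. Then $\UB(T)=\bigcap_{T'}\UB(T')\supseteq\U$ forces $\UB(T)\subseteq\U$, while the inclusion $\U\subseteq\UB(T)$ is immediate from the definitions. It therefore suffices to prove $\UB(T)\subseteq\UB(\mu_iT)$ for each $i$; the reverse inclusion then follows by symmetry, since mutation is reversible (Proposition~\ref{prop:mutation}) and $\mu_i\mu_iT$ is equivalent to $T$. Laurent rings do not see coefficient-unit rescalings, so the choice of representatives plays no role.

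Fix $i$, put $T_1=\mu_iT$, and let $h\in\UB(T)$. By definition $h\in\Lr(T)\cap\Lr(T_1)$, so only the rings $\Lr(\mu_jT_1)$ with $j\ne i$ need checking. Fix $j\ne i$ and consider the alternating path
\[
T_3=\mu_iT_2\longleftarrow T_2=\mu_jT\longleftarrow T\longrightarrow T_1\longrightarrow \mu_jT_1 .
\]
The path $\mu_jT_1\to T_1\to T\to \mu_jT$ has the shape \eqref{eq:caterpillar}, with directions $j,i,j$. Label its Laurent rings $\Lr_0=\Lr(\mu_jT_1)$, $\Lr_1=\Lr(T_1)$, $\Lr_2=\Lr(T)$ and $\Lr_3=\Lr(\mu_jT)$.

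The second inclusion of Lemma~\ref{lem:intersection}, applied to the reversed path, gives $\Lr_3\cap\Lr_1\subseteq\Lr_0$. This is precisely the first inclusion for the path $\mu_jT_1\to T_1\to T\to\mu_jT$, read with $\Lr_0=\Lr(\mu_jT_1)$. We have $h\in\Lr(T_1)=\Lr_1$, and $h\in\Lr(\mu_jT)=\Lr_3$ because $h\in\UB(T)$. Hence $h\in\Lr(\mu_jT_1)$. Doing this for every $j\ne i$ yields $h\in\UB(T_1)$.

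The main obstacle is orientation bookkeeping: one must check that Lemma~\ref{lem:intersection} is applied with the correct endpoint. The lemma's first inclusion says that the two rings at odd positions of the path together control the ring at position $0$. We therefore set up the path so that the unknown ring $\Lr(\mu_jT_1)$ sits at position $0$, with $\Lr(T_1)$ and $\Lr(\mu_jT)$ at positions $1$ and $3$. The path $\mu_jT_1\to T_1\to T\to\mu_jT$ does exactly this, since $T_1\to T$ is a mutation at $i$ and its two neighbours are mutations at $j$.

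It remains to confirm that the caterpillar inputs \eqref{eq:caterpillar-input} hold for this path. The cited Lemmas~5.2 and 5.4 of \cite{lam2012laurent} are stated for an arbitrary alternating path in two distinct directions $i\ne j$, so they apply directly; if needed, the remark that they hold up to coefficient units covers the representative choices. Note also that $\mu_j$ taken from $T$ and $\mu_j$ taken from $T_1$ are different mutations. This causes no difficulty, since each occurs as a separate step of the path.
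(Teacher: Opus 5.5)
Your proposal is correct and is essentially the paper's proof: for each $j\ne i$ you apply the caterpillar intersection lemma to the alternating path through $\mu_jT$, $T$, $\mu_iT$, $\mu_j\mu_iT$ to get $\Lr(\mu_jT)\cap\Lr(\mu_iT)\subseteq\Lr(\mu_j\mu_iT)$, which gives invariance of $\UB$ under one mutation, and you then conclude by connectedness. The paper reads off both inclusions from the two halves of the lemma rather than invoking involutivity, and your final step has a small slip: it should say $\UB(T)=\UB(T')\subseteq\Lr(T')$ for every seed $T'$, hence $\UB(T)\subseteq\U$, whereas the containment $\bigcap_{T'}\UB(T')\supseteq\U$ that you display points the wrong way to force this.
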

\begin{proof}
Let $T'=\mu_kT$. For $j\ne k$, apply Lemma~\ref{lem:intersection} to
\[
 \mu_jT\xrightarrow{j}T\xrightarrow{k}T'\xrightarrow{j}\mu_jT'.
\]
It gives
\[
 \Lr(\mu_jT)\cap\Lr(T')\subseteq\Lr(\mu_jT'),\qquad
 \Lr(T)\cap\Lr(\mu_jT')\subseteq\Lr(\mu_jT).
\]
Together with the two common rings $\Lr(T)$ and $\Lr(T')$, these inclusions
show that $\UB(T)=\UB(T')$. In rank one the equality is immediate from
involutivity. Connectedness now shows that this common upper bound lies in
every Laurent ring of the pattern, and hence in $\U$. The reverse inclusion
is part of the definition.
\end{proof}

This is Fei's LP starfish theorem \cite[Theorem~1.3]{fei2026}. We have included
the short argument to make clear that no assumption $\hf\ell=f^\ell$,
and no pairwise coprimality assumption on the exchange polynomials is needed.
The weaker invariance statement under $\hf\ell=f^\ell$ also appears in
\cite{du2022some}.

\subsection{Cluster variables are prime in the upper algebra}
Fix a seed with coordinates $\mathbf q=\{q_1,\ldots,q_n\}$ and exchange polynomials
$g^1,\ldots,g^n$. Let $q_i'$ be the mutation partner of $q_i$, and write
\[
 \Lr_i=A[q_1^{\pm1},\ldots,q_{i-1}^{\pm1},(q_i')^{\pm1},
                   q_{i+1}^{\pm1},\ldots,q_n^{\pm1}].
\]
The exchange polynomial $g^i$ is a prime nonunit in $\Lr_i$. Denote its prime
valuation on $\F$ by $\nu_i$. This records the exponent of $g^i$ in a
factorisation of a nonzero rational function in the Laurent UFD $\Lr_i$.
Since $q_iq_i'=\widehat{g^i}$, we have
\begin{equation}\label{eq:valuation-coordinates}
 \nu_i(q_j)=\delta_{ij}.
\end{equation}

\begin{lem}\label{lem:prime}
Every cluster variable is prime in $\U$, and $\U^\times=A^\times$.
More precisely,
\begin{equation}\label{eq:prime-ideal}
 q_i\U=\U\cap g^i\Lr_i,
 \qquad \nu_i(h)>0\ \Longleftrightarrow\ h\in q_i\U
 \quad(0\ne h\in\U).
\end{equation}
\end{lem}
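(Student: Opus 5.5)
The plan is to use Proposition~\ref{prop:upper}, which gives $\U=\UB(T)=\Lr(T)\cap\bigcap_j\Lr(\mu_jT)$ for the fixed seed $T$ with cluster $\mathbf q$. Membership in $\U$ is then tested in $\Lr(T)$ and in the $n$ neighbouring Laurent rings $\Lr_j$. The proof of \eqref{eq:prime-ideal} splits into two inclusions; primality and the unit statement are then read off from the valuation.

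\emph{The inclusion $q_i\U\subseteq\U\cap g^i\Lr_i$.} If $h=q_iu$ with $u\in\U$, then $h\in\U$ because $q_i\in\A\subseteq\U$. In $\Lr_i$ the variable $q_i'$ is a unit and $q_i=\widehat{g^i}/q_i'$. Here $\widehat{g^i}$ is $g^i$ times a monomial in the $q_t$ with $t\neq i$, and these are units in $\Lr_i$. So $q_i$ is a unit multiple of $g^i$ in $\Lr_i$, and hence $h\in g^i\Lr_i$ since $u\in\Lr_i$.

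\emph{The inclusion $\U\cap g^i\Lr_i\subseteq q_i\U$.} Take $h\in\U$ with $g^i\mid h$ in $\Lr_i$, and put $u=h/q_i$. I will check that $u\in\UB(T)$ ring by ring.
\begin{itemize}
\item In $\Lr(T)$, $q_i$ is a unit, so $u\in\Lr(T)$.
\item In $\Lr_i$, $q_i\sim g^i$ up to a unit, so $u\in\Lr_i$ by the hypothesis on $h$.
\item In $\Lr_j$ with $j\neq i$, $q_i$ is still a coordinate and hence a unit, so $u\in\Lr_j$.
\end{itemize}
Thus $u\in\UB(T)=\U$. This step is painless exactly because $\UB(T)$ only involves one-step neighbours, and it is why Proposition~\ref{prop:upper} is essential.

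\emph{Primality.} Note that $\nu_i(h)>0$ is equivalent to $h\in g^i\Lr_i$ for $h\in\U\subseteq\Lr_i$. Combined with the two inclusions, this gives the valuation equivalence in \eqref{eq:prime-ideal}. Hence $q_i\U=\{h\in\U:\nu_i(h)>0\}$, which is a prime ideal because $\nu_i$ is a valuation: $\nu_i(hh')=\nu_i(h)+\nu_i(h')$, and $\nu_i\geq0$ on $\U$. This ideal is proper, since $\nu_i(1)=0$, and $q_i\neq0$. So $q_i$ is prime in $\U$. Every cluster variable lies in some seed, so the argument applies to all of them.

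\emph{Units.} Clearly $A^\times\subseteq\U^\times$. Conversely, if $u\in\U^\times$, then $u$ and $u^{-1}$ both lie in $\Lr(T)$, so $u$ is a unit of the Laurent ring, namely $u=c\,\mathbf q^{\mathbf m}$ with $c\in A^\times$. Since $u,u^{-1}\in\U$, we have $\nu_j(u)\geq0$ and $\nu_j(u^{-1})\geq0$, hence $\nu_j(u)=0$ for every $j$. By \eqref{eq:valuation-coordinates}, $\nu_j(c\,\mathbf q^{\mathbf m})=m_j$. Therefore $\mathbf m=0$ and $u=c\in A^\times$.

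\emph{Main obstacle.} The delicate point is verifying \eqref{eq:valuation-coordinates} and that $g^i$ is a prime nonunit of $\Lr_i$; I take both as given in the setup. The first requires $g^i$ to be coprime to the coordinates, which follows from (LP1). The second uses irreducibility in the polynomial ring together with the fact that $g^i$ has no monomial factor, so it stays prime after localisation.
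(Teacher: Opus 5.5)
Your proof is correct and follows essentially the same route as the paper. You check both inclusions of $q_i\U=\U\cap g^i\Lr_i$ in the rings defining $\UB(T)$ and invoke Proposition~\ref{prop:upper}, you get primality from the contracted prime ideal (read through $\nu_i$), and you force units to be constants via $\nu_j(c\,\mathbf q^{\mathbf m})=m_j$; one small correction to your closing remark is that $\nu_i(q_j)=0$ for $j\neq i$ is automatic because $q_j$ is a unit of $\Lr_i$, so (LP1) is needed only to make $g^i$ a prime nonunit of $\Lr_i$, not for \eqref{eq:valuation-coordinates} itself.
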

\begin{proof}
The coordinate $q_i$ is a unit in the Laurent ring of its seed and in every
first neighbouring ring except $\Lr_i$. In $\Lr_i$, it is a unit times $g^i$.
Consequently, if $h\in\U\cap g^i\Lr_i$, then $h/q_i$ belongs to every
Laurent ring defining the upper bound at this seed. By
Proposition~\ref{prop:upper}, $h/q_i\in\U$. The other containment in
\eqref{eq:prime-ideal} is immediate. The contracted ideal on the right is
proper and prime, so $q_i$ is a prime nonunit of $\U$. The valuation
criterion follows as well.

If $h\in\U^\times$, its expression as a unit of $\Lr(T)$ is
$h=\epsilon\prod_jq_j^{\beta_j}$, with $\epsilon\in A^\times$ and
$\beta_j\in\Z$. Both $h$ and $h^{-1}$ belong to $\Lr_i$. Equation
\eqref{eq:valuation-coordinates} gives $\beta_i\geq0$ and $-\beta_i\geq0$
for every $i$. Thus $h=\epsilon$. Conversely every coefficient unit is
plainly a unit of $\U$.
\end{proof}

\subsection{Polynomial residues}
Let $v$ be any cluster variable, let $w_1,\ldots,w_{n-1}$ be the other
coordinates of a seed containing it, and let $v'$ be its mutation partner.
Put
\begin{equation}\label{eq:residue-chart}
 B=A[w_1^{\pm1},\ldots,w_{n-1}^{\pm1}],\qquad
 vv'=E=mP,
\end{equation}
where $P\in B$ is the exchange polynomial and $m\in B^\times$ is the
normalising monomial. The polynomial $P$ is a prime nonunit of $B$, so
$B/(P)$ is an integral domain.

\begin{lem}[Polynomial residues]\label{lem:residues}
Reduction modulo $P$ in the neighbouring chart $B[(v')^{\pm1}]$ induces a
homomorphism
\begin{equation}\label{eq:rho}
 \rho:\U\longrightarrow (B/(P))[T],\qquad
 \ker\rho=v\U,\qquad \rho(v')=T,
\end{equation}
where $T$ is an indeterminate.
\end{lem}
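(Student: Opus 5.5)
The homomorphism will be built by composing the inclusion of $\U$ into one Laurent ring with a reduction map. By Proposition~\ref{prop:upper}, every element of $\U$ lies in the neighbouring Laurent ring
\[
 \Lr' = B[(v')^{\pm1}],
\]
the chart in which $v'$ replaces $v$. The natural reduction is $\Lr'\to (B/(P))[T^{\pm1}]$, sending $B\to B/(P)$ and $v'\mapsto T$. It is a surjective ring homomorphism with kernel $P\Lr'$. Since $P$ is prime in $B$, it is prime in $B[(v')^{\pm1}]$, and the target is a domain. Restricting to $\U$ gives a homomorphism
\[
 \tilde\rho:\U\to (B/(P))[T^{\pm1}]
\]
with $\tilde\rho(v')=T$, because $v'\in\U$ by the Laurent phenomenon. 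For the kernel, Lemma~\ref{lem:prime} applied to the seed containing $v$ gives $v\U=\U\cap P\Lr'$, since $P$ and $g^i$ coincide up to the coefficient-unit conventions and $m$ is a unit of $\Lr'$. Hence $\ker\tilde\rho=\U\cap P\Lr'=v\U$.

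The remaining, and main, step is to show that the image has no negative powers of $T$, so that $\tilde\rho$ lands in $(B/(P))[T]$. Take $h\in\U$. It also lies in $\Lr(\text{seed of }v)=B[v^{\pm1}]$, so
\[
 h=\sum_{k} c_k v^k,\qquad c_k\in B.
\]
Substituting $v=E/v'=mP/v'$ gives, in $\Lr'$,
\[
 h=\sum_k c_k m^kP^k (v')^{-k}.
\]
Modulo $P$, all terms with $k>0$ vanish, and the terms with $k\le0$ contribute $c_k m^{k}P^{k}(v')^{-k}$. When $k<0$ this has a negative power of $P$, so the comparison is not immediate.

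To handle this, I write
\[
 h=\sum_{k\ge0} c_k v^k + \sum_{j>0} c_{-j}v^{-j},
\]
with the first sum in $B[v]$. The first sum maps to $\tilde\rho(c_0)$, a constant in $T$. Hence $\tilde\rho(h)$ equals $\tilde\rho(c_0)+\tilde\rho(N)$, where
\[
 N=\sum_{j>0}c_{-j}v^{-j}=\sum_{j>0}c_{-j}(v')^{j}(mP)^{-j}\in\Lr'.
\]
Every term of $N$ carries a positive power of $v'$, so $N$ is a polynomial in $v'$ with no constant term and coefficients in $B[P^{-1}]$. Since $N=h-\sum_{k\ge0}c_kv^k$ lies in $\Lr'$, comparing $(v')$-coefficients shows each coefficient $c_{-j}(mP)^{-j}$ lies in $B$.

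Thus $N\in B[v']$ and $\tilde\rho(N)\in (B/(P))[T]$. So $\tilde\rho(h)\in(B/(P))[T]$, which gives the map $\rho$ of \eqref{eq:rho}. The obstacle was exactly the coefficient comparison: the two Laurent expansions, in $v$ and in $v'$, share the ring $B$ of frozen-like coordinates, and the expansion in $v^{\pm1}$ is unique. I expect the remaining checks, namely that $P$ is prime in $\Lr'$ and that the unit $m$ is harmless, to be routine.
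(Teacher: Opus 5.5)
Your proof is correct and follows the paper's argument. You expand $h$ in $B[v^{\pm1}]$, substitute $v=E/v'$, use uniqueness of Laurent coefficients in $v'$ to get $c_kE^k\in B$, and identify the kernel as $\U\cap P\Lr'=v\U$ via Lemma~\ref{lem:prime}. Your split into $k\ge0$ and $k<0$ is harmless but unnecessary, since the coefficient comparison applies to the whole sum at once. Also, strictly speaking you are applying the reduction map of $\Lr'$, not $\tilde\rho$, to $c_0$ and $N$, because these need not lie in $\U$.
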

\begin{proof}
For $h\in\U$, write $h=\sum_{r\in\Z}b_rv^r$ in $B[v^{\pm1}]$, with
only finitely many nonzero $b_r\in B$. In the neighbouring chart,
\[
 h=\sum_r b_rE^r(v')^{-r}.
\]
Since $h\in B[(v')^{\pm1}]$, uniqueness of Laurent coefficients implies
$b_rE^r\in B$ for every $r$. Reduce coefficients modulo $P$, and denote
the image of $v'$ by $T$. Each term $b_rE^r(v')^{-r}$ then maps to
$\overline{b_rE^r}T^{-r}$. For $r>0$, the coefficient is divisible by $P$
and hence vanishes, so only terms with $r\leq0$ survive. The image is
therefore polynomial, not merely Laurent, in $T$.

The kernel is $\U\cap PB[(v')^{\pm1}]=v\U$ by
Lemma~\ref{lem:prime}.
\end{proof}

We note in particular that $\U/v\U$ embeds in a polynomial ring in one variable.

\begin{example}\label{ex:residue}
In the rank-two seed over $\Z$ with exchange polynomials $1+x_2$ and $1+x_1$, set
$v=x_1$, $v'=(1+x_2)/x_1$, and $B=\Z[x_2^{\pm1}]$.
Reduction modulo $P=1+x_2$ gives
\[
 \rho(x_1)=0,\qquad \rho(x_2)=-1,\qquad \rho(v')=T.
\]
The cluster variable $(1+x_1+x_2)/(x_1x_2)=(1+v')/x_2$ has residue
$-(1+T)$. 
\end{example}

\section{Linear seeds and their principal minors}\label{sec:determinants}
We now return to the fixed linear seed \eqref{eq:linear}. Recall $z_\ell=f^\ell/x_\ell$ and $M=\diag(z_1,\ldots,z_n)-C$ from
\eqref{eq:matrix-intro}. Thus the defining equations for the matrix are
\begin{equation}\label{eq:linear-system}
 x_\ell z_\ell=a_0^\ell+\sum_{t\ne\ell}a_t^\ell x_t,
 \qquad
 M\begin{pmatrix}x_1\\\vdots\\x_n\end{pmatrix}
 =\begin{pmatrix}a_0^1\\\vdots\\a_0^n\end{pmatrix}.
\end{equation}
The initial cluster variables on the right of the definition of $z_\ell$
are fixed rational coordinates, so $Y_I$ is a well-defined element of $\F$.

\begin{remark}
This construction specialises to the determinants of Lam and
Pylyavskyy \cite[Section~4.2, Proposition~4.5]{lam2016linear}. In their graph
setting, $a_0^\ell=A_\ell$ are independent coefficient indeterminates and
$a_t^\ell$ is $1$ or $0$ according as $\ell\to t$ is an edge or not. Their
matrix has diagonal entries $(A_\ell+\sum_{\ell\to t}x_t)/x_\ell$ and
off-diagonal entries $-1$ on edges. Its principal minors are their
acyclic-function generating functions $Y_I$, with the same notation used
here. For strongly connected $I$, these are precisely the noninitial
cluster variables \cite[Theorem~1.1]{lam2016linear}. The graph-case $Y_I$
also occur in the cluster-monomial basis theorem of
Dantas e Moura, Telekicherla Kandalam, and Woodruff \cite{dantas2025}.
\end{remark}

For arbitrary coefficients, the determinants can split or vanish. Our
additional assertion is that every noninitial cluster variable divides a
nonzero $Y_I$ in $\U$ (Theorem~\ref{thm:minor-divisibility}); we do not assert
that these weighted minors are themselves always cluster variables.

\begin{lem}\label{lem:minors-upper}
Every $z_\ell$, and hence every $Y_I$, lies in $\A\subseteq\U$.
\end{lem}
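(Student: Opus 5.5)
The plan is to show first that each $z_\ell$ is, up to a unit of $A$, a cluster variable of $\A$. Then $Y_I$, being a polynomial over $A$ in the $z_\ell$ with coefficients from $C$, will lie in $\A$, and the inclusion $\A\subseteq\U$ is Proposition~\ref{prop:laurent}.

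The key step is to compare $z_\ell=f^\ell/x_\ell$ with the mutated variable $y_\ell=\hf\ell/x_\ell$. By \eqref{eq:hat}, $y_\ell=z_\ell\prod_{t\ne\ell}x_t^{-e_{\ell t}}$. It therefore suffices to show that every exponent $e_{\ell t}$ vanishes for a linear seed. Then $z_\ell=y_\ell$ is a cluster variable of $\mu_\ell S$, and so $z_\ell\in\A$.

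To see that $e_{\ell t}=0$, I would use the characterisation after Definition~\ref{def:hat}: $e_{\ell t}$ is the largest power of $f^t$ dividing $f^\ell|_{x_t\leftarrow f^t/\xi}$. Suppose $e_{\ell t}>0$. The argument in the proof of Lemma~\ref{lem:denominator} then shows that the constant coefficient $b_0$ of $f^\ell$ in $x_t$ is divisible by $f^t$. Write
\[
f^\ell=b_0+a_t^\ell x_t,\qquad b_0=a_0^\ell+\sum_{s\ne\ell,t}a_s^\ell x_s .
\]
Substituting gives
\[
f^\ell|_{x_t\leftarrow f^t/\xi}=b_0+a_t^\ell f^t\xi^{-1}.
\]
This is divisible by $f^t$ only if $f^t\mid b_0$ in the Laurent ring, and hence in the polynomial ring.

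Both $f^t$ and $b_0$ have degree at most one, so there are two cases.
\begin{itemize}
\item If $f^t$ has degree one, then $b_0=cf^t$ with $c\in A$. Hence $f^\ell=cf^t+a_t^\ell x_t$. By (LP2), $f^t$ does not involve $x_t$. Moreover $b_0$ does not involve $x_\ell$, so by Lemma~\ref{lem:denominator} neither does $f^t$. Now consider $f^\ell|_{x_t\leftarrow f^t/\xi}=f^t(c+a_t^\ell\xi^{-1})$. For this to be divisible by $f^t$ once more, $f^t$ would have to divide the Laurent unit-type factor $c+a_t^\ell\xi^{-1}$. That factor does not involve the variables of $f^t$, so this cannot happen unless $f^t$ is a constant. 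So at most one power of $f^t$ occurs.
\item If $f^t$ is an irreducible constant, a power of it could divide $b_0$, and so $e_{\ell t}$ could be positive.
\end{itemize}

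The main obstacle is therefore that $e_{\ell t}$ need not vanish: a single factor of $f^t$ (or a constant prime power) may be removed. So $y_\ell$ may equal $z_\ell$ divided by a monomial in the $x_t$. I would then argue directly that $z_\ell\in\A$ anyway. Since $z_\ell=y_\ell\prod_t x_t^{e_{\ell t}}$ with $e_{\ell t}\ge0$, and $y_\ell$ and the $x_t$ are all cluster variables, $z_\ell$ is a product of cluster variables. Hence $z_\ell\in\A$ with no vanishing of exponents needed at all.

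This observation settles the statement cleanly, and the case analysis above is unnecessary. It remains to note that $Y_I=\det M_{I,I}$ expands as a polynomial with coefficients in $A$ (the entries $a^\ell_t$) in the $z_\ell$. Since $\A$ is an $A$-algebra, $Y_I\in\A$.
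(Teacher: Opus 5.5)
Your final argument is correct and is exactly the paper's proof: $z_\ell=y_\ell\prod_{t\ne\ell}x_t^{e_{\ell t}}$ with all $e_{\ell t}\geq0$ is a product of cluster variables, $Y_I$ is a polynomial over $A$ in the $z_\ell$, and the Laurent phenomenon gives $\A\subseteq\U$. Delete the detour through $e_{\ell t}=0$ and the opening claim that $z_\ell$ is associated to a cluster variable. As you note yourself, $e_{\ell t}$ can be positive for linear seeds, so that claim is false in general, and it is not needed.
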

\begin{proof}
If $y_\ell$ is obtained by mutating $x_\ell$, then
\[
 z_\ell=\frac{f^\ell}{x_\ell}
 =y_\ell\prod_{t\ne\ell}x_t^{e_{\ell t}}\in\A.
\]
The determinant $Y_I$ is a polynomial over $A$ in the $z_\ell$, so it also
belongs to $\A$. Apply the Laurent phenomenon.
\end{proof}

\begin{example}\label{ex:ranktwo-minors}
For $f^1=1+x_2$ and $f^2=1+x_1$ over $\Z$,
\[
 M=\begin{pmatrix}(1+x_2)/x_1&-1\\-1&(1+x_1)/x_2\end{pmatrix}.
\]
Its three nonempty principal minors are
\[
 Y_{\{1\}}=\frac{1+x_2}{x_1},\qquad
 Y_{\{2\}}=\frac{1+x_1}{x_2},\qquad
 Y_{\{1,2\}}=\frac{1+x_1+x_2}{x_1x_2}.
\]
In this example the minors themselves are precisely the noninitial cluster
variables. In general it is their prime factors that matter.
\end{example}

\subsection{The directed coefficient graph}
Define a directed graph $\Gamma$ on $[n]\cup\{0\}$ by
\[
 \ell\longrightarrow t\ \Longleftrightarrow\ a_t^\ell\ne0
 \quad(t\ne\ell),\qquad
 \ell\longrightarrow0\ \Longleftrightarrow\ a_0^\ell\ne0.
\]
The vertex $0$ has no outgoing edges. For graph calculations only, put
$x_0=1$. A subset is strongly connected if any two of its vertices are
joined in each direction by directed paths contained in the subset.

\begin{remark}[Rooted-forest interpretation]\label{rem:forests}
Put $N=M\diag(x_1,\ldots,x_n)$. Give $\ell\to t$ weight
$w_{\ell t}=a_t^\ell x_t$ for $t\ne0$, and $\ell\to0$ weight
$w_{\ell0}=a_0^\ell$. The augmented matrix
\[
 \widetilde N=
 \begin{pmatrix}N&-(a_0^\ell)_{\ell\in[n]}\\0&0\end{pmatrix}
\]
is the row Laplacian of this weighted graph. Applying the principal-minor
case of the all-minors matrix-tree theorem \cite{chaiken1982}, with the
transpose convention for a row Laplacian, gives
\begin{equation}\label{eq:forest-expansion}
 \left(\prod_{\ell\in I}x_\ell\right)Y_I
 =\det N_{I,I}
 =\sum_{F\in\mathcal T_I}\ \prod_{(\ell\to t)\in F}w_{\ell t}.
\end{equation}
Here $\mathcal T_I$ consists of spanning directed forests rooted at
$([n]\cup\{0\})\setminus I$.
Thus the determinantal
numerators are weighted rooted-forest polynomials. The identity holds over
$A$ by specialisation of the polynomial identity in formal edge weights.
The next proposition additionally rules out total cancellation when the
coefficients satisfy relations; we give a direct leading-term argument.
\end{remark}

\begin{defn}\label{def:closed}
A \emph{closed homogeneous component} is a nonempty strongly connected
subset $J\subseteq[n]$ such that
\begin{equation}\label{eq:closed}
 a_0^\ell=0\quad(\ell\in J),\qquad
 a_t^\ell=0\quad(\ell\in J,\ t\notin J).
\end{equation}
Equivalently, it is a sink strongly connected component of $\Gamma$ other
than $\{0\}$.
\end{defn}
Here ``homogeneous'' means that the exchange polynomials in these rows have no constant
terms, and ``closed'' means that they involve no coordinates outside $J$.

\begin{prop}[Nonvanishing criterion]\label{prop:nonzero}
For a nonempty $I\subseteq[n]$, the following conditions are equivalent:
\begin{enumerate}
\item $Y_I\ne0$ as a rational function in the initial coordinates;
\item every vertex of $I$ has a directed path to $([n]\cup\{0\})\setminus I$;
\item $I$ contains no closed homogeneous component.
\end{enumerate}
\end{prop}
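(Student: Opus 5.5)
We prove the cycle of implications $(2)\Leftrightarrow(3)$, $(3)\Rightarrow(2)\Rightarrow(1)$ and $(1)\Rightarrow(2)$, working with the numerator $\det N_{I,I}=(\prod_{\ell\in I}x_\ell)Y_I$ of Remark~\ref{rem:forests}. This is a polynomial in $A[x_1,\ldots,x_n]$. Its $(\ell,t)$ entry for $\ell,t\in I$ is $f^\ell$ if $\ell=t$, and $-a_t^\ell x_t$ otherwise.

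\emph{Graph equivalence $(2)\Leftrightarrow(3)$.} Suppose $I$ contains a closed homogeneous component $J$. By \eqref{eq:closed}, every edge out of $J$ stays in $J$, so no vertex of $J$ reaches the complement, and (2) fails. Conversely, suppose some vertex of $I$ cannot reach $([n]\cup\{0\})\setminus I$. Let $R\subseteq I$ be the set of vertices reachable from it. Then $R$ is forward closed and avoids $0$. Among the strongly connected components of $\Gamma$ restricted to $R$, choose a sink component $J$. It is forward closed in $\Gamma$ and does not contain $0$, so it is a closed homogeneous component inside $I$.

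\emph{$(1)\Rightarrow(2)$.} Let $R$ be as above. Since $R$ is forward closed, the rows indexed by $R$ of $N_{I,I}$ are supported in the columns indexed by $R$. Hence $\det N_{I,I}=\det N_{R,R}\cdot\det N_{I\setminus R,I\setminus R}$, which is block triangular after reordering. On $R$ we have $a_0^\ell=0$ and $a_t^\ell=0$ for $t\notin R$. So each row of $N_{R,R}$ sums to $f^\ell-\sum_{t\in R\setminus\{\ell\}}a_t^\ell x_t=a_0^\ell+\sum_{t\notin R}a_t^\ell x_t=0$. Thus $N_{R,R}\mathbf 1=0$, its determinant vanishes, and $Y_I=0$.

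\emph{$(2)\Rightarrow(1)$.} This is the main point, since the forest expansion \eqref{eq:forest-expansion} may suffer cancellation when the coefficients satisfy relations. Instead we find a single monomial that occurs in exactly one forest. Assuming (2), choose for each $\ell\in I$ an edge $\ell\to\sigma(\ell)$ lying on a shortest path from $\ell$ to the complement $([n]\cup\{0\})\setminus I$. Shortest-path distance strictly decreases along $\sigma$, so $F=\{\ell\to\sigma(\ell)\}$ is a rooted spanning forest with nonzero weight $\prod_{\ell\in I}a_{\sigma(\ell)}^\ell x_{\sigma(\ell)}$.

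Different forests may share a monomial, so we need a selection rule that makes the monomial of $F$ unique. We order $I$ by distance layers $D_1,D_2,\ldots$ to the complement. We then use a degree argument: extract the coefficient of the monomial that maximises the total exponent of variables outside $I$ first, then of variables in $D_1$, and so on. In this lexicographic extraction, a forest attains the extremal monomial only if every vertex of $D_1$ points outside $I$. Once the $D_1$ choices are fixed, vertices of $D_2$ must point into $D_1$ or outside, and so on inductively.

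There is a remaining complication: ties between different targets with the same variable exponents still allow several forests. This is harmless, because we may instead argue directly with the determinant. Order $I$ by layers and specialise to the leading form, keeping for the $D_k$ rows only the terms that decrease the layer. The resulting matrix is block triangular, with diagonal blocks diagonal matrices whose entries are nonzero linear forms $\sum_{t\in D_{k-1}\text{ or outside}}a_t^\ell x_t$, including the constant term when $t=0$. Its determinant is therefore a product of nonzero polynomials over the domain $A$, hence nonzero. We then check that this is a genuine lexicographic leading form of $\det N_{I,I}$, via a weight $x_t\mapsto \lambda^{c_k}x_t$ for $t\in D_k$ with $c_1>c_2>\cdots$ suitably spaced. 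Verifying that this weighting isolates exactly the layer-decreasing terms is the step I expect to require the most care.
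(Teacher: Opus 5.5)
Your proof is correct. The equivalence $(2)\Leftrightarrow(3)$ and the vanishing direction match the paper. Your identity $N_{R,R}\mathbf 1=0$ on the reachable set is the paper's $M_{J,J}(x_\ell)_{\ell\in J}=0$ in different clothing.

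For $(2)\Rightarrow(1)$ you take a different route.
\begin{itemize}
\item \emph{The paper's route.} It perturbs the distance weights to $\omega_t=-d_t+\epsilon_t$, so that each row has a unique heaviest edge. Then exactly one choice of edges, a single rooted forest, attains the top weight. Its monomial survives with coefficient $\prod_\ell a^\ell_{t(\ell)}$, after checking that the selected rows form a unitriangular matrix.
\item \emph{Your route.} You keep unperturbed weights and identify the whole top-weight component of $\det N_{I,I}$ with the truncated determinant. That matrix is block triangular with diagonal diagonal blocks, so it equals $\prod_{\ell\in I}\bigl(\sum_{t:\,d_t=d_\ell-1}a^\ell_t x_t\bigr)$ with $x_0=1$. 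This is a product of nonzero linear forms.
\end{itemize}
What your version buys is that it needs no tie-breaking and gives a factored leading form. The paper's surviving monomial is just the $\epsilon$-leading term in the expansion of your product.

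The step you flagged as delicate is routine once the weights are pinned down.
\begin{itemize}
\item Give $x_t$ the weight $c_{d_t}$ with $c_0=0$ on the whole complement of $I$. This is forced, because the constant terms $a_0^\ell$ (equivalently $x_0=1$) have weight $0$, and you want them kept alongside the $x_t$ with $t\in[n]\setminus I$.
\item Any strictly decreasing sequence $c_0>c_1>c_2>\cdots$ works, for example $c_k=-k$, which is the paper's weight without the $\epsilon_t$. No spacing condition is needed, and a single integer grading suffices, so nothing lexicographic is involved.
\item Every edge $\ell\to t$ has $d_t\ge d_\ell-1$. So the heaviest part of row $\ell$ has weight $c_{d_\ell-1}$ and consists exactly of the layer-decreasing terms.
\item By multilinearity in the rows, the component of $\det N_{I,I}$ of weight $\sum_\ell c_{d_\ell-1}$ is your truncated determinant. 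Every other contribution uses a lighter part in some row and so has strictly smaller weight.
\end{itemize}
Your preliminary lexicographic-extraction paragraph is superseded by this and can be dropped.
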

\begin{proof}
If a vertex cannot reach the complement of $I$, all vertices reachable from
it lie in $I$. A sink strongly connected component of this reachable
subgraph has no outgoing edge in the full graph and is a closed homogeneous
component. Conversely, a vertex of a closed component cannot leave it.
This proves the equivalence of (2) and (3).

Suppose $J\subseteq I$ is a closed homogeneous component. Order the vertices
of $J$ before those of $I\setminus J$. The block in the rows of $J$ and
columns of $I\setminus J$ is zero, so $M_{I,I}$ is block triangular.
Furthermore, \eqref{eq:linear-system} gives
\[
 M_{J,J}(x_\ell)_{\ell\in J}=0.
\]
This is a nonzero kernel vector over $\F$. Thus $Y_J=0$ and $Y_I=0$.

It remains to show that (2) implies (1), including when the coefficients
have signs or satisfy relations. Consider the polynomial matrix
\begin{equation}\label{eq:Nmatrix}
 N_I=M_{I,I}\diag(x_t:t\in I),\qquad
 (N_I)_{\ell\ell}=f^\ell,\quad
 (N_I)_{\ell t}=-a_t^\ell x_t\ (\ell\ne t).
\end{equation}
For $\ell\in I$, let $d_\ell$ be the shortest directed distance to the
complement of $I$, and put $d_t=0$ outside $I$, including at $0$.
Choose pairwise distinct real numbers $\epsilon_1,\ldots,\epsilon_n$ in
$(0,1/3)$ and assign weights
\[
 \omega_t=-d_t+\epsilon_t\quad(t\ne0),\qquad \omega_0=0
\]
to $x_t$. These are auxiliary weights on formal monomials, not values of
the coefficients. Every neighbour of $\ell$ has distance at least
$d_\ell-1$, and one has distance exactly $d_\ell-1$. Hence there is a unique
neighbour $t(\ell)$ of greatest weight and its distance is $d_\ell-1$.

Let $e_\ell$ denote the standard row vectors with positions indexed by $I$.
Row $\ell$ of $N_I$ is the sum, over its outgoing edges, of
\[
 a_t^\ell x_t(e_\ell-e_t)\quad(t\in I),\qquad
 a_t^\ell x_te_\ell\quad(t\notin I).
\]
The same formula includes the edge to $0$ since $x_0=1$.
Choose in each row the summand indexed by $t(\ell)$. Its edge strictly
decreases distance. In an ordering by distance, the matrix of the selected
row vectors is triangular with diagonal entries one. Thus its determinant
is one. By multilinearity in the rows, the unique choice of largest total
weight contributes
\begin{equation}\label{eq:leading-tree}
 \prod_{\ell\in I}a_{t(\ell)}^\ell x_{t(\ell)}
\end{equation}
to $\det N_I$. Every other choice has smaller total weight. The coefficient
in \eqref{eq:leading-tree} is nonzero because $A$ is an integral domain. This term
cannot cancel, so $\det N_I\ne0$ and $Y_I\ne0$.
\end{proof}

We note this proof requires neither positive
coefficients nor characteristic zero. 

\subsection{Cofactor ratios in the upper algebra}
A vanishing determinant cannot detect a prime divisor. We next show that
its proper cofactors provide the additional regular functions needed in
the proof of finite type.

\begin{lem}\label{lem:cofactor}
Let $J$ be a closed homogeneous component and let $i\in J$. Then
\begin{equation}\label{eq:cofactor}
 Y_{J\setminus\{i\}}\ne0,\qquad
 Z_{J,i}:=\frac{Y_{J\setminus\{i\}}}{x_i}\in\U.
\end{equation}
\end{lem}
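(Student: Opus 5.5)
The plan has two parts: first show $Y_{J\setminus\{i\}}\ne0$ from the graph criterion, then prove divisibility by $x_i$ in $\U$ using the residue homomorphism of Lemma~\ref{lem:residues}. For the second part we reduce the homogeneous linear system \eqref{eq:linear-system} on $J$ modulo the exchange polynomial $f^i$.

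\emph{Nonvanishing.} If $J=\{i\}$, then $Y_\varnothing=1$ and there is nothing to prove. Otherwise we apply Proposition~\ref{prop:nonzero} to $I=J\setminus\{i\}$ and verify condition (2). Since $J$ is strongly connected, every vertex of $J\setminus\{i\}$ has a directed path inside $J$ to $i$. Truncating this path at its first vertex outside $J\setminus\{i\}$ gives a path to the complement of $J\setminus\{i\}$. Hence $Y_{J\setminus\{i\}}\ne0$.

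\emph{Divisibility.} By Lemma~\ref{lem:minors-upper}, $Y_{J\setminus\{i\}}\in\U$, and every $z_\ell$ and $x_\ell$ lies in $\U$. Take $v=x_i$ in the initial seed, with mutation partner $y_i$, $B=A[x_t^{\pm1}:t\ne i]$, $P=f^i$, and let $\rho$ be the residue map of Lemma~\ref{lem:residues}. Since $\ker\rho=x_i\U$, it suffices to show $\rho(Y_{J\setminus\{i\}})=0$.

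The map $\rho$ is a ring homomorphism with $\rho(x_i)=0$. For $t\ne i$, $x_t$ is a unit of $B$ and $P$ is a nonunit, so $\rho(x_t)$ is a nonzero element of the domain $D=(B/(P))[T]$.

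Now fix $\ell\in J\setminus\{i\}$. By closedness \eqref{eq:closed}, $f^\ell=\sum_{t\in J\setminus\{\ell\}}a_t^\ell x_t$ has no constant term and involves only coordinates indexed by $J$. Applying $\rho$ to the identity $x_\ell z_\ell=f^\ell$ in $\U$ gives
\[
 \rho(z_\ell)\rho(x_\ell)-\sum_{t\in J\setminus\{i,\ell\}}a_t^\ell\rho(x_t)=0,
\]
because the term $a_i^\ell x_i$ is sent to zero. Equivalently,
\[
 \rho\bigl(M_{J\setminus\{i\},J\setminus\{i\}}\bigr)\,\bigl(\rho(x_t)\bigr)_{t\in J\setminus\{i\}}=0.
\]
This kernel vector has nonzero entries over the fraction field of $D$. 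Hence the determinant $\rho(Y_{J\setminus\{i\}})$ vanishes, and therefore $Y_{J\setminus\{i\}}\in x_i\U$, that is, $Z_{J,i}\in\U$.

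The main point to check carefully is the legitimacy of applying $\rho$ entrywise. One needs $\rho(\det)=\det(\rho(\cdot))$, which holds because the determinant is a polynomial over $A$ in the $z_\ell$ and $\rho$ is an $A$-algebra homomorphism on $\U$. One also needs that the images $\rho(x_t)$ do not vanish. Closedness is used exactly to ensure that no constant terms or outside coordinates survive in the reduced rows.
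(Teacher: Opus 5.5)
\textbf{Gap: the singleton case.} You set aside $J=\{i\}$ as trivial for the nonvanishing claim. In that case, however, the second claim would read $Z_{J,i}=1/x_i\in\U$, which is false, since $\nu_i(1/x_i)=-1$ while $\U\subseteq\Lr_i$. Your divisibility argument also fails there: the kernel vector $(\rho(x_t))_{t\in J\setminus\{i\}}$ is the empty vector, and the empty determinant is $Y_\varnothing=1$. The missing step is to show that this case never occurs. If $\{i\}$ were a closed homogeneous component, then $a_0^i=0$ and $a_t^i=0$ for every $t\ne i$. Since $f^i$ does not involve $x_i$ by (LP2), this gives $f^i=0$, contradicting (LP1). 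Hence $|J|\ge2$, so $I=J\setminus\{i\}$ is nonempty, which is exactly what your kernel-vector argument needs.

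\textbf{The rest is correct, by a different route.} With that repair, your proof works, but it differs from the paper's. The paper stays in the initial chart. It writes $(\prod_{t\in I}x_t)Y_I=\det N_I\in A[x_t:t\in J]$ and notes that row $\ell$ of $N_I$ sums to $a_i^\ell x_i$. At $x_i=0$ the all-ones vector is therefore in the kernel, so $x_i\mid\det N_I$ as polynomials. Thus $Z_{J,i}$ is Laurent and polynomial in $x_i$. It stays Laurent after substituting $x_i=\widehat{f^i}/y_i$, is trivially Laurent in the other first neighbours, and so lies in $\UB(S)=\U$.

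You instead reduce modulo $f^i$ in the neighbouring chart via the residue map. There $x_i$ maps to $0$, the vector $(\rho(x_t))_{t\in I}$ becomes a kernel vector of $\rho(M_{I,I})$, and you conclude from $\ker\rho=x_i\U$. The linear algebra is the same in both: the kernel vector $(x_t)$ of $M_{I,I}$ corresponds to the all-ones vector of $N_I=M_{I,I}\diag(x_t)$.

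Your version reuses the mechanism of Theorem~\ref{thm:minor-divisibility} and avoids the row-sum computation, but it depends on Lemmas~\ref{lem:prime} and~\ref{lem:residues}. The paper's version needs only Proposition~\ref{prop:upper}, and it also gives the explicit Laurent form of $Z_{J,i}$.
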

\begin{proof}
A singleton closed component would have $f^i=0$, contrary to (LP1). Hence
$|J|\geq2$, and $I=J\setminus\{i\}$ is nonempty. Every vertex of $I$ can
reach $i$ inside $J$, so Proposition~\ref{prop:nonzero} gives $Y_I\ne0$.
Since the exchanges in the rows of $J$ involve only coordinates in $J$,
\[
 D_I:=\Bigl(\prod_{t\in I}x_t\Bigr)Y_I
       =\det N_I\in A[x_t:t\in J].
\]
The sum of row $\ell\in I$ of $N_I$ is $a_i^\ell x_i$.
After setting $x_i=0$, every row sum is zero. The nonempty matrix then has
the all-ones vector in its kernel, so its determinant is zero.
It follows that $x_i\mid D_I$ in the polynomial ring. Thus
\begin{equation}\label{eq:cofactor-laurent}
 Z_{J,i}=\frac{D_I/x_i}{\prod_{t\in I}x_t}
\end{equation}
is polynomial in $x_i$ and Laurent in the other initial coordinates.

In the first neighbouring chart at $i$, substitute $x_i=\hf i/y_i$ into
\eqref{eq:cofactor-laurent}. Since there is no negative power of $x_i$,
the result is Laurent. In every other first neighbouring chart, $Y_I$ is
Laurent by Lemma~\ref{lem:minors-upper} and $x_i$ is an unchanged unit.
Hence $Z_{J,i}\in\UB(S)=\U$ by Proposition~\ref{prop:upper}.
\end{proof}

Notice that the conclusion is membership in the upper algebra. We need
neither $Z_{J,i}$ to be a cluster variable nor an equality $\A=\U$.

\section{Divisibility and finite type}\label{sec:finite}
We now combine the polynomial residues with the linear relations of the
initial seed. The essential observation is that coordinates of maximum
residue degree force a principal submatrix to be singular.

\subsection{Every noninitial variable divides a nonzero minor}
\begin{thm}\label{thm:minor-divisibility}
Let $S$ be the linear seed \eqref{eq:linear}. If a cluster variable $v$ is
not associated over $A$ to any initial coordinate, then
\[
 Y_I\ne0,\qquad \frac{Y_I}{v}\in\U
\]
for some nonempty $I\subseteq[n]$.
\end{thm}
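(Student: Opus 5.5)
Let $v$ be a cluster variable not associated to any initial coordinate, and take the residue map $\rho:\U\to (B/(P))[T]$ of Lemma~\ref{lem:residues} along $v$. Since $\ker\rho=v\U$ by Lemma~\ref{lem:prime}, it suffices to find a nonempty $I$ with $Y_I\ne0$ and $\rho(Y_I)=0$. The initial coordinates $x_1,\ldots,x_n$ and the ratios $z_\ell$ all lie in $\U$; the $z_\ell$ by Lemma~\ref{lem:minors-upper}. So each has a polynomial residue $\rho(x_t)\in(B/(P))[T]$.

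\emph{Step 1: no residue vanishes.} Because $v$ is prime in $\U$ and not associated to any $x_t$, and both are primes of $\U$ with $\U^\times=A^\times$, we get $v\nmid x_t$. Hence $\rho(x_t)\ne0$ for every $t$.

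\emph{Step 2: a maximum-degree argument.} Let $D=\max_t\deg_T\rho(x_t)$ and let $I=\{t:\deg_T\rho(x_t)=D\}$, which is nonempty. Apply $\rho$ to the relations \eqref{eq:linear-system}:
\[
\rho(x_\ell)\rho(z_\ell)=a_0^\ell+\sum_{t\ne\ell}a_t^\ell\rho(x_t).
\]
Take $\ell\in I$ and compare coefficients of $T^D$. If $D>0$, the right side has degree at most $D$. Its $T^D$-coefficient is $\sum_{t\in I\setminus\{\ell\}}\bar a_t^\ell c_t$, where $c_t$ is the leading coefficient of $\rho(x_t)$. The left side has degree $D+\deg\rho(z_\ell)$. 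So either $\rho(z_\ell)$ has degree $0$ with constant term $\zeta_\ell$, giving
\[
\zeta_\ell c_\ell-\sum_{t\in I\setminus\{\ell\}}\bar a_t^\ell c_t=0,
\]
or both sides vanish at top order. Either way the nonzero vector $(c_t)_{t\in I}$ lies in the kernel of the matrix $\overline{M_{I,I}}$ of top-degree parts over the domain $B/(P)$. To make this rigorous, I would work in $\Frac(B/(P))((T^{-1}))$: the matrix $\rho(M_{I,I})$ applied to $(\rho(x_t))$ equals $(a_0^\ell+\sum_{t\notin I}a^\ell_t\rho(x_t))$, which has degree $<D$. Then I would conclude $\det\rho(M_{I,I})=\rho(Y_I)$ is zero, or at least that its leading form vanishes. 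I expect a cleaner route is to choose $I$ as a suitable set and show $\rho(Y_I)=0$ directly via the degree-$D$ kernel vector. This needs care, because $\rho(z_\ell)$ may have positive degree. In that case, though, the row forces an inconsistency unless the top terms cancel. Sorting this out, possibly by shrinking $I$ to the rows where $\deg\rho(z_\ell)=0$, is the main obstacle. In the case $D=0$, all residues are constants and $T=\rho(v')$ must appear somewhere, so I would instead run the same argument with the lowest-order or a substituted grading.

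\emph{Step 3: nonvanishing of $Y_I$.} If the singular set $I$ produced has $Y_I=0$, then $I$ contains a closed homogeneous component $J$ by Proposition~\ref{prop:nonzero}. For such $J$ I would use the cofactor ratios $Z_{J,i}\in\U$ of Lemma~\ref{lem:cofactor}, and the relation $M_{J,J}(x_\ell)=0$ restricted to $J$. These show that the $\rho(x_t)$, $t\in J$, are proportional to the cofactor vector. That lets me remove $J$, or replace $I$ by a set $I'$ avoiding all closed components on which $\rho(Y_{I'})=0$ still holds. An induction on $|I|$ then yields a nonempty $I$ with $Y_I\ne0$ and $\rho(Y_I)=0$, hence $Y_I/v\in\U$.
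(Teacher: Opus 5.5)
The overall framework matches the paper's: the residue map $\rho$ along $v$, the set $I$ of coordinates of maximal residue degree $D$, the top-coefficient kernel vector, and the cofactor ratios $Z_{J,i}$. However, the two places you leave open are exactly where the paper needs specific arguments.

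\textbf{The case $D=0$ and Step~2.} $D=0$ is not a case to handle by another grading. With $D=0$ the constants $\overline{a_0^\ell}$ enter the degree-zero comparison, so no kernel vector results. The point is that this case cannot occur. Put $k=\Frac(B/(P))$. Since every $\rho(x_t)\ne0$, $\rho$ extends to $\U[x_1^{-1},\ldots,x_n^{-1}]=\Lr(S)$ with values in $k(T)$. If all $\rho(x_t)$ were constant, the image of $\Lr(S)$ would lie in $k$, contradicting $v'\in\Lr(S)$ and $\rho(v')=T$. Your remark that $T$ \emph{must appear somewhere} is the right observation; use it to prove $D>0$.

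Your worry in Step~2 is also unfounded. For $\ell\in I$ the right-hand side has degree at most $D$, and $k[T]$ is a domain. So $\rho(z_\ell)$ cannot have positive degree; it lies in $k$, possibly as $0$. Thus $\rho(M_{I,I})$ is a matrix over $k$, and $(c_t)_{t\in I}$ is a genuine kernel vector. Hence $\rho(Y_I)=0$ exactly, not just its leading form, with no shrinking of $I$.

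\textbf{The real gap: Step~3.} Deleting $J$ and inducting does not work. The rows of $J$ vanish in the columns of $I\setminus J$, so $M_{I,I}$ is block triangular. The $J$-part of your kernel vector only witnesses the already-known singularity of $\rho(M_{J,J})$. The remaining rows also involve $c_J$, so they give no information about $\rho(Y_{I\setminus J})$. The proportionality to the cofactor vector is likewise never turned into a vanishing statement.

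The missing idea is a second degree comparison with the specific choice $I'=J\setminus\{i\}$.
\begin{itemize}
\item Since $I'\subseteq I$, the matrix $\rho(M_{I',I'})$ has entries in $k$, so $\rho(Y_{I'})\in k$.
\item On the other hand $Y_{I'}=x_iZ_{J,i}$ with $Z_{J,i}\in\U$ by Lemma~\ref{lem:cofactor}. So $\rho(Y_{I'})=\rho(x_i)\rho(Z_{J,i})$, where $\rho(Z_{J,i})\in k[T]$ and $\deg_T\rho(x_i)=D>0$.
\item Hence $\rho(Y_{I'})=0$. Since $Y_{I'}\ne0$, Lemma~\ref{lem:residues} gives that $v$ divides $Y_{I'}$ in $\U$.
\end{itemize}
The paper phrases this as a contradiction with the hypothesis that $v$ divides no nonzero minor, but it is the same argument. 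Note that it uses the membership $Z_{J,i}\in\U$, so that $\rho(Z_{J,i})$ is a polynomial, not merely the identity $x_iZ_{J,i}=Y_{J\setminus\{i\}}$. No induction on $|I|$ is required.
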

\begin{proof}
Suppose, for a contradiction, that $v$ divides no nonzero principal minor
in $\U$. Choose a seed containing $v$, let $v'$ be its mutation partner,
and use the notation $B,P$ of \eqref{eq:residue-chart}. Compose the map of
Lemma~\ref{lem:residues} with the inclusion into $k[T]$, where
$k=\Frac(B/(P))$. Denote the composite again by $\rho$, so that
\[
 \rho:\U\longrightarrow k[T],\qquad \ker\rho=v\U,
 \qquad \rho(v')=T.
\]
Write
\[
 X_\ell=\rho(x_\ell),\qquad W_\ell=\rho(z_\ell).
\]
All these polynomials are nonzero. Indeed, $X_\ell=0$ would imply
$v\mid x_\ell$ in $\U$. By Lemma~\ref{lem:prime}, both are prime, so they
would be associated by a unit of $\U$, hence by a unit of $A$, contrary to
the choice of $v$. Also $z_\ell=Y_{\{\ell\}}\ne0$, so $W_\ell=0$ would
contradict the assumption that $v$ divides no nonzero minor.

At least one $X_\ell$ has positive degree in $T$. To see this, observe that
\begin{equation}\label{eq:localise}
 \U[x_1^{-1},\ldots,x_n^{-1}]
 =A[x_1^{\pm1},\ldots,x_n^{\pm1}]=\Lr(S).
\end{equation}
The two inclusions follow from $A[\mathbf x]\subseteq\U\subseteq\Lr(S)$.
Since every $X_\ell\ne0$, $\rho$ extends to this localisation with values
in $k(T)$. If all the $X_\ell$ were constant, the image of $\Lr(S)$ would
lie in $k$. But the Laurent phenomenon puts $v'$ in $\Lr(S)$, whereas
$\rho(v')=T\notin k$. This is impossible.

Set
\begin{equation}\label{eq:maximum-degree}
 d=\max_\ell\deg_T X_\ell>0,\qquad
 I=\{\ell:\deg_T X_\ell=d\}.
\end{equation}
Let bars denote the images of coefficients in $k$. Applying $\rho$ to
\eqref{eq:linear-system} gives
\begin{equation}\label{eq:residue-linear-system}
 X_\ell W_\ell=\overline{a_0^\ell}
       +\sum_{t\ne\ell}\overline{a_t^\ell}X_t.
\end{equation}
For $\ell\in I$, the right side has degree at most $d$. The left side is
a nonzero product, and $\deg_T X_\ell=d$. Thus
\begin{equation}\label{eq:constant-W}
 W_\ell\in k^\times\qquad(\ell\in I).
\end{equation}
If $b_\ell\in k^\times$ is the leading coefficient of $X_\ell$ for
$\ell\in I$, comparing coefficients of $T^d$ in
\eqref{eq:residue-linear-system} yields
\[
 W_\ell b_\ell=
 \sum_{t\in I\setminus\{\ell\}}\overline{a_t^\ell}b_t.
\]
Therefore $\rho(M_{I,I})(b_\ell)_{\ell\in I}=0$, and
\begin{equation}\label{eq:singular-residue}
 \rho(Y_I)=0.
\end{equation}
If $Y_I\ne0$ in the original ambient field, then
\eqref{eq:singular-residue} says $v\mid Y_I$ in $\U$, the desired
contradiction.

It remains to deal with $Y_I=0$ identically. This is different from merely
having zero residue. By Proposition~\ref{prop:nonzero}, there is a closed
homogeneous component $J\subseteq I$. For any $i\in J$,
Lemma~\ref{lem:cofactor} supplies
\begin{equation}\label{eq:cofactor-identity}
 x_iZ_{J,i}=Y_{J\setminus\{i\}},\qquad
 Z_{J,i}\in\U,\qquad Y_{J\setminus\{i\}}\ne0.
\end{equation}
By our assumption, the right side has nonzero residue. Its residue is also
constant: its diagonal entries are among $W_\ell$ for $\ell\in I$, and
\eqref{eq:constant-W} applies. Reducing \eqref{eq:cofactor-identity} gives
\[
 X_i\rho(Z_{J,i})\in k^\times.
\]
But $\deg_T X_i=d>0$ and $\rho(Z_{J,i})\in k[T]$. A polynomial of positive
degree cannot multiply another polynomial to give a nonzero constant.
This final contradiction proves the theorem.
\end{proof}

\begin{cor}\label{cor:common-multiple}
The element $H$ in \eqref{eq:QH} is nonzero and belongs to $\A$.
For every cluster variable $v$, $H/v\in\U$.
\end{cor}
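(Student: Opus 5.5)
The corollary has three parts: nonvanishing of $H$, membership $H\in\A$, and divisibility of $H$ by every cluster variable in $\U$.

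\emph{Nonvanishing and membership.} $H=(x_1\cdots x_n)Q$, where $Q$ is a product of only those $Y_I$ that are nonzero. The ambient field $\F$ is a domain, so $Q\ne0$ and hence $H\ne0$. By Lemma~\ref{lem:minors-upper} each $Y_I$ lies in $\A$, and the initial coordinates $x_\ell$ are cluster variables. Since $\A$ is a ring, $H\in\A$.

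\emph{Divisibility.} Let $v$ be any cluster variable. We split into two cases.

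\begin{enumerate}
\item Suppose $v\sim_A x_j$ for some $j$, say $v=ux_j$ with $u\in A^\times$. Then
\[
 H/v=u^{-1}\Bigl(\prod_{t\ne j}x_t\Bigr)Q,
\]
which is a product of elements of $\A\subseteq\U$, so $H/v\in\U$.
\item Otherwise, Theorem~\ref{thm:minor-divisibility} supplies a nonempty $I$ with $Y_I\ne0$ and $Y_I/v\in\U$. This $Y_I$ is one of the factors of $Q$. Writing $Q=Y_I\cdot Q'$, where $Q'$ is the product of the remaining nonzero minors, we get
\[
 H/v=(x_1\cdots x_n)\,Q'\,(Y_I/v).
\]
Each of these three factors lies in $\U$ (the first two by Lemma~\ref{lem:minors-upper}), and $\U$ is a ring, so $H/v\in\U$.
\end{enumerate}

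There is no real obstacle here; the substantive work is Theorem~\ref{thm:minor-divisibility}. The only subtlety is that the first case is genuinely needed, because that theorem excludes variables associated to initial coordinates, and it is exactly why the factor $x_1\cdots x_n$ appears in $H$.
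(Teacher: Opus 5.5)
Your proof is correct and follows the paper's argument essentially verbatim. You get nonvanishing from the domain property and membership in $\A$ from Lemma~\ref{lem:minors-upper}. You then split into the cases $v\sim_A x_i$, where you use the factor $x_i$ of $H$, and otherwise, where you use the factor $Y_I$ supplied by Theorem~\ref{thm:minor-divisibility} times the remaining factors of $H$.
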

\begin{proof}
All retained factors belong to $\A$ and are nonzero. If $v\sim_A x_i$, use
the factor $x_i$ of $H$. Otherwise Theorem~\ref{thm:minor-divisibility}
supplies a factor $Y_I$ of $Q$ with $Y_I/v\in\U$ and we multiply by the remaining
factors of $H$.
\end{proof}

\subsection{Counting cluster variables}
We stress that no unique factorisation property for $\U$ is being assumed.
Factorisation will take place in the initial Laurent unique factorisation domain
$\Lr_0=A[x_1^{\pm1},\ldots,x_n^{\pm1}]$.
As before, we let $\nu_i$ be the first-neighbour valuations for the initial seed (see
\eqref{eq:valuation-coordinates}).

\begin{lem}\label{lem:localised-primes}
A noninitial cluster variable $v$ is a prime nonunit in $\Lr_0$ and satisfies
$\nu_i(v)=0$ for every $i$. If two noninitial cluster variables are associated
in $\Lr_0$, then they are associated over $A$.
\end{lem}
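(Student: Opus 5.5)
We prove the three claims in turn: the valuation vanishing, primality in $\Lr_0$, and the comparison of associates.

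\emph{Valuations.} Let $v$ be a noninitial cluster variable. By Lemma~\ref{lem:prime}, $\nu_i(v)>0$ would mean $v\in x_i\U$, so $v=x_iu$ with $u\in\U$. Since $v$ is prime in $\U$, it is in particular irreducible there, and $x_i$ is a nonunit of $\U$. Hence $u\in\U^\times=A^\times$, so $v\sim_A x_i$. This contradicts the assumption that $v$ is noninitial, in the sense of not being associated over $A$ to an initial coordinate. Since $v\in\U\subseteq\Lr_i$, we also have $\nu_i(v)\geq0$, and therefore $\nu_i(v)=0$ for every $i$.

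\emph{Primality in $\Lr_0$.} First, $v$ is a nonunit of $\Lr_0$. The units of $\Lr_0$ are $\epsilon\prod_j x_j^{\beta_j}$ with $\epsilon\in A^\times$. If $v$ were such a unit, then $\nu_i(v)=\beta_i$ by \eqref{eq:valuation-coordinates}, so every $\beta_i=0$ by the first step. Then $v\in A^\times$, which contradicts $v$ being a nonunit of $\U$.

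For primality, we use \eqref{eq:localise}, $\Lr_0=\U[x_1^{-1},\ldots,x_n^{-1}]$, and the fact that a prime element remains prime in a localisation unless it becomes a unit. Concretely, suppose $gh\in v\Lr_0$ with $g,h\in\Lr_0$. Clearing monomial denominators, we may take $g,h\in\U$ and $gh=v\,c\,m^{-1}$ with $c\in\U$ and $m$ a monomial in the $x_j$. Then $gh\,m=vc$ lies in $v\U$. Since $v$ is prime in $\U$, it divides $g$, $h$, or some $x_j$ in $\U$. The last option would force $v\sim_A x_j$ by irreducibility, exactly as in the first step. So $v$ divides $g$ or $h$ in $\U$, hence in $\Lr_0$.

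\emph{Associates.} Suppose $v\sim w$ in $\Lr_0$ for noninitial $v,w$. Then $w=\epsilon\, v\prod_j x_j^{\beta_j}$ with $\epsilon\in A^\times$. Applying $\nu_i$ gives $0=0+\beta_i$, so every $\beta_j=0$ and $w=\epsilon v$.

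The main point needing care is the localisation step. It requires $v\nmid x_j$ in $\U$, which we obtain from irreducibility of $v$ and $\U^\times=A^\times$ in Lemma~\ref{lem:prime}, and it uses \eqref{eq:localise} to ensure that elements of $\Lr_0$ have the form $\U$-element divided by a monomial.
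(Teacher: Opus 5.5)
Your proof is correct and follows essentially the paper's route: $\nu_i(v)=0$ from \eqref{eq:prime-ideal} together with $\nu_i(v)\ge0$, primality in $\Lr_0$ because $v\U$ avoids the monomials in the $x_j$ and so survives the localisation \eqref{eq:localise}, and the associate claim by applying $\nu_i$. One small correction: $v\nmid x_j$ in $\U$ does not follow from irreducibility of $v$, as your closing paragraph says. It follows from irreducibility of $x_j$, which is prime in $\U$ by Lemma~\ref{lem:prime}, or more directly from your own nonunit step, since $x_j=vu$ with $u\in\U\subseteq\Lr_0$ would make $v$ a unit of $\Lr_0$.
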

\begin{proof}
By Lemma~\ref{lem:prime}, $v$ and the $x_i$ are prime in $\U$, and $v$ is
not associated to any $x_i$. Hence $v$ divides no product of initial
coordinates. The prime ideal $v\U$ survives the localisation
\eqref{eq:localise}, so $v$ remains a prime nonunit in $\Lr_0$.

Likewise $x_i$ does not divide $v$ in $\U$. Since $v\in\U$, its valuation
$\nu_i(v)$ is nonnegative, and \eqref{eq:prime-ideal} shows that it is zero.
If $v=\epsilon x_1^{\beta_1}\cdots x_n^{\beta_n}w$ for another noninitial
variable $w$, with $\epsilon\in A^\times$, applying $\nu_i$ gives
$0=\beta_i$. Thus $v=\epsilon w$.
\end{proof}

\begin{prop}\label{prop:variable-count}
If $Q$ has $s$ distinct irreducible factors up to Laurent units, there are
at most $n+s$ cluster-variable classes up to $A^\times$.
\end{prop}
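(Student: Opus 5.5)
The plan is to map each noninitial cluster-variable class injectively to an irreducible factor of $Q$ in $\Lr_0$. The $n$ initial coordinates $x_1,\ldots,x_n$ then account for at most $n$ further classes, which gives the bound $n+s$.

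First, fix a noninitial cluster variable $v$. By Theorem~\ref{thm:minor-divisibility} there is a nonempty $I$ with $Y_I\ne0$ and $Y_I/v\in\U$. Since $\U\subseteq\Lr_0$, this gives $v\mid Y_I$ in $\Lr_0$, and hence $v\mid Q$ in $\Lr_0$, because $Q$ is a product of the nonzero $Y_J$, all of which lie in $\Lr_0$ by Lemma~\ref{lem:minors-upper}. By Lemma~\ref{lem:localised-primes}, $v$ is a prime nonunit of the UFD $\Lr_0$. Therefore $v$ is associated in $\Lr_0$ to one of the $s$ irreducible factors $p_1,\ldots,p_s$ of $Q$, and we may assign to $v$ an index $j(v)\in\{1,\ldots,s\}$ with $v\sim p_{j(v)}$ up to a unit of $\Lr_0$.

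Second, we show that this assignment is injective on classes up to $A^\times$. If $j(v)=j(w)$ for two noninitial cluster variables, then $v$ and $w$ are associated in $\Lr_0$. The second part of Lemma~\ref{lem:localised-primes} then upgrades this to $v\sim_A w$. Hence the noninitial classes number at most $s$.

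Third, we handle the initial variables. Any cluster variable associated over $A$ to some $x_i$ lies in one of at most $n$ classes. A noninitial variable cannot also lie in one of these classes, by the definition of noninitial. Combining the three steps, the total number of classes is at most $n+s$.

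The only delicate point is that associates must be taken in $\Lr_0$, where the units include the coordinate monomials. The valuation argument in Lemma~\ref{lem:localised-primes}, which gives $\nu_i(v)=0$, is precisely what removes these monomials, so once that lemma is in place no real obstacle remains. One should also check that $Q$ is a nonzero nonunit, or otherwise handle $s=0$. If $s=0$, then $Q$ is a Laurent unit, the first step shows that no noninitial cluster variable exists, and the bound reads $n$, which is consistent.
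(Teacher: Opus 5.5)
Your proposal is correct and follows essentially the same route as the paper: Theorem~\ref{thm:minor-divisibility} makes each noninitial variable divide $Q$ in $\Lr_0$. Lemma~\ref{lem:localised-primes} then makes it associated to one of the $s$ irreducible factors of $Q$ and turns Laurent associates into $A^\times$-associates, with the $n$ initial classes added separately. Your extra remark on the case $s=0$ is a harmless refinement.
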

\begin{proof}
Every noninitial cluster variable divides $Q$ in $\U$, and hence in
$\Lr_0$, by Theorem~\ref{thm:minor-divisibility}. It is prime there by
Lemma~\ref{lem:localised-primes}, so it is associated to one of the $s$
irreducible factors of $Q$. The last assertion of the lemma says that two
different coefficient-unit classes cannot correspond to the same Laurent
prime. The initial variables contribute the remaining $n$ classes.
\end{proof}

\subsection{Seeds are determined by their clusters}
For completeness, we include the form of Du and Li's seed-rigidity theorem
\cite[Theorem~3.1]{du2022some} that we need.

\begin{lem}[Seed rigidity]\label{lem:rigidity}
Two seeds in one LP algebra whose clusters agree up to coefficient units
are equivalent.
\end{lem}
\begin{proof}
First suppose the clusters agree exactly, with coordinates
$\mathbf q=\{q_1,\ldots,q_n\}$. Write the exchanges of the two seeds as
$g^i,h^i$, and their mutation partners at $i$ as
\[
 u=\frac{\widehat{g^i}}{q_i},\qquad
 w=\frac{\widehat{h^i}}{q_i}.
\]
Set $B_i=A[q_j^{\pm1}:j\ne i]$. Laurentness of $u$ in the neighbouring
chart $B_i[w^{\pm1}]$ and the equality
\[
 u=w\,\frac{\widehat{g^i}}{\widehat{h^i}}
\]
show that $\widehat{g^i}/\widehat{h^i}\in B_i$: this is the single
coefficient of $w$ in the Laurent expansion. Interchanging the two seeds
shows that the reciprocal is also in $B_i$. The ratio is therefore a unit
of $B_i$, namely a coefficient unit times a Laurent monomial in the $q_j$.
Taking polynomial representatives gives $g^i\sim_A h^i$ for every $i$.

If the clusters agree only up to coefficient units, express both sets of
exchange polynomials in the same rescaled coordinates. The Laurent rings
are unchanged, and the two displayed exchange relations acquire only
coefficient-unit factors. The same argument applies and gives equivalence
in the common ambient field.
\end{proof}

\subsection{A compatible choice of representatives}
\begin{prop}\label{prop:normalisation}
If an LP algebra of rank $n$ has $N<\infty$ cluster-variable classes, then it admits a normalisation with at most $\binom{N}{n}$ seeds.
\end{prop}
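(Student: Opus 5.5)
We will fix, once and for all, one representative $v_1,\ldots,v_N$ of each cluster-variable class up to $A^\times$. For each $n$-element subset of these representatives that occurs, up to units, as a cluster, we will fix one representative seed. We then show that these seeds can be linked by the exact mutation rule. By Lemma~\ref{lem:rigidity}, a seed is determined up to equivalence by its cluster up to units. Hence there are at most $\binom{N}{n}$ seed-equivalence classes, and the problem is only to make a mutation-compatible choice of representatives.

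\emph{Construction.} For each class $[T]$, fix the seed $T^\circ$ whose cluster is exactly the chosen set $\{v_{j_1},\ldots,v_{j_n}\}$. Its exchange polynomials, expressed in these coordinates, are determined up to $A^\times$. We need to choose the units so that mutation of $T^\circ$ at $v$ lands exactly on the chosen representative of the neighbouring class. That representative has the exact new cluster variable and exactly $f_i^i=f^i$ at the new position. Since $f_i^i=f^i$ is forced, the exchange polynomial attached to a variable must be compatible along every mutation in which that variable participates.

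\emph{Key observation.} Mutation at index $i$ sends $x_i$ to $y_i=\hf i/x_i$. If $v'$ is the chosen representative of the class of $y_i$, then $v'=\epsilon y_i$ with $\epsilon\in A^\times$. We will absorb $\epsilon$ into the exchange polynomial, replacing $f^i$ by $\epsilon f^i$. This does not change the Laurent ring, and by Proposition~\ref{prop:mutation} it does not affect validity of the seed or reversibility. The exchange polynomial at a position is then the same element in both seeds joined by the mutation, and the partner relation $v\,v'=\widehat{g}$ holds exactly. By Lemma~\ref{lem:rigidity} applied to the pair of mutually partnered variables, the exchange polynomial $g$ for the pair $\{v,v'\}$ is determined up to $A^\times$ by the two clusters. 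Fixing $g$ exactly, by the identity $vv'=\widehat g$, then pins it down. The other exchange polynomials of the mutated seed are defined only up to $A^\times$ by Definition~\ref{def:mutation}, so we may choose them to be those of the fixed representative.

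\emph{Main obstacle.} The difficulty is consistency. A single seed $T^\circ$ has $n$ exchange polynomials, and the one attached to $v_{j}$ must equal $v_jv_j'$ divided by the monomial, where $v_j'$ is the chosen partner. This value is determined by the pair alone, independently of which seed we view it from. The natural definition is to set $g^{j}:=\pol(v_jv_j')$, computed without rescaling coefficients. We must check that this is legitimate: that $v_jv_j'$ equals $\widehat{g^j}$ rather than a unit multiple of it. This holds after choosing the unit in $g^j$. The remaining point is that the exchange Laurent polynomial exponents, and hence $\widehat{\cdot}$, are computed from the seed, and rescaling other $g^t$ by units does not change them. Once this is verified, mutation of $T^\circ$ at $j$ yields a seed whose cluster is exactly the chosen one, with $f_j^j=g^j$ the prescribed polynomial and the other polynomials equal to the prescribed ones up to allowed units. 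The resulting collection is therefore closed under mutation with specified unit choices, is normalised, and has at most $\binom{N}{n}$ seeds.
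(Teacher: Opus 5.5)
Your proposal is correct and follows essentially the paper's route. You fix class representatives and use Lemma~\ref{lem:rigidity} so that each chosen $n$-set $\mathcal C$ determines one seed class, and hence its partner classes. You define the exchange at $u$ as $\pol_{\mathcal C}(uv)$ with $v$ the chosen partner. Unit-invariance of the exponents $e_{\ell t}$ together with $\pol_{\mathcal C}(uv)=\pol_{\mathcal C'}(vu)$ (the complementary coordinates coincide) gives $f_i^i=f^i$ exactly, and the other positions are matched by the free unit choices in mutation.

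There are a few small points of precision:
\begin{itemize}
\item Rigidity should be applied to the seed class of the cluster, not to ``the pair''. The polynomial $\pol(uv)$ depends on the complementary coordinates, not on $\{u,v\}$ alone.
\item Once $g^j:=\pol(v_jv_j')$ is fixed, no further unit choice is needed: $\pol$ does not rescale coefficients, so $\widehat{g^j}=v_jv_j'$ holds automatically.
\item You should note, as the paper does, that the resulting pattern is connected because it is the image of the original one.
\end{itemize}
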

\begin{proof}
Choose one representative of each variable class. Every genuine cluster
determines a set $\mathcal C$ of $n$ chosen representatives. Coordinates
in one cluster cannot be associated, by algebraic independence.
Lemma~\ref{lem:rigidity} says that $\mathcal C$ determines a single
seed-equivalence class. Mutation compatibility therefore determines, for
each $u\in\mathcal C$, the class of its mutation partner. Let $v$ be the
chosen representative of that class.

The product $uv$ is a coefficient-unit multiple of the exchange Laurent
polynomial in that direction, expressed in the coordinates
$\mathcal C\setminus\{u\}$. Define
\begin{equation}\label{eq:chosen-exchange}
 g_{\mathcal C,u}=\pol_{\mathcal C}(uv).
\end{equation}
These exchanges give a valid seed $S_{\mathcal C}$ equivalent to the
original class. Indeed, in each direction we have just rescaled coordinates and
multiplied an ordinary exchange polynomial by a coefficient unit.
The normalisation exponents are unaffected by these unit changes.
Consequently, the exchange Laurent polynomial of
$g_{\mathcal C,u}$ is exactly $uv$, and mutating $u$ inserts exactly $v$.

Put $\mathcal C'=(\mathcal C\setminus\{u\})\cup\{v\}$. By mutation
compatibility, a mutation of $S_{\mathcal C}$ at $u$ is equivalent to
$S_{\mathcal C'}$. In all other directions the unit choices in mutation
allow us to select precisely the exchanges of $S_{\mathcal C'}$.
At the replaced coordinate the polynomial is required to remain unchanged.
This too holds exactly, since $uv$ is independent of both $u$ and $v$, and the
other coordinates are unchanged. Hence
\[
 g_{\mathcal C,u}=\pol_{\mathcal C}(uv)
 =\pol_{\mathcal C'}(vu)=g_{\mathcal C',v}.
\]
Thus these representative seeds form a mutation-closed pattern. It is
connected as the image of the original pattern and has at most
$\binom{N}{n}$ seeds.
\end{proof}

\begin{proof}[Proof of Theorem~\ref{thm:main}]
Theorem~\ref{thm:minor-divisibility} proves (1), and
Corollary~\ref{cor:common-multiple} proves (2).
Proposition~\ref{prop:variable-count} gives at most $n+s$ variable classes.
Proposition~\ref{prop:normalisation} supplies a finite normalisation and
the seed bound in (3). This proves finite type.
\end{proof}

\section*{Acknowledgements}
The author would like to thank Anna Felikson for useful discussions
surrounding this problem, and Tom Ducat for insights that helped shape
the approach taken in this paper.

\bibliographystyle{alpha}
\bibliography{references}
\end{document}